\newcommand\spanmat{{\bigvee}}
\newcommand\dist{{\text{\rm\,dist}}}
\newtheorem{theorem}{Theorem}
\newtheorem{lemma}[theorem]{Lemma}
\newtheorem{corollary}[theorem]{Corollary}
\newtheorem{proposition}[theorem]{Proposition}
\newtheorem*{theoremletterA}{Theorem A}
\newtheorem*{lemmaletterB}{Lemma B}
\newtheorem*{theoremletterC}{Theorem C}
\newtheorem*{theoremletterD}{Theorem D}
\newtheorem*{propositionletter?}{Proposition E}
\newtheorem*{lemmaletterF}{Lemma F}
\newtheorem*{lemmaletterG}{Lemma G}
\newtheorem*{lemmaletterH}{Lemma H}
\newtheorem*{theoremUK}{Theorem on $U+\mathfrak{S}_\infty$ families}
\newtheorem*{theoremAOS}{Theorem on AOS}
\newtheorem*{AAK}{Adamyan-Arov-Krein Theorem}
\newtheorem*{thmHa}{Hartman's theorem}
\newtheorem*{ISP3}{Inverse Schur-Nevanlinna process III}
\newtheorem{rem}{Remark}
\newcommand{\D}{{\mathbb D}}
\newcommand{\T}{{\mathbb T}}
\newcommand{\te}{\theta }
\begin{document}
\title[Schur--Nevanlinna parameters]{Schur--Nevanlinna parameters, Riesz bases, and compact Hankel operators on the model space}
\author{Inna Boricheva}


\subjclass[2020]{Primary: 46E22, Secondary: 30D55, 46B15, 47B32}

\address{\!\!\!\!\!\!22 avenue Elleon, 
Marseille, France
\newline {\tt innaboritchev@gmail.com}
}

\begin{abstract}
We study Riesz bases/Riesz sequences
of reproducing kernels in the model space $K_\te$ in connection with the corresponding  Schur--Nevanlinna parameters and functions. 
In particular, we construct inner functions with given Schur--Nevanlinna parameters at a given sequence $\Lambda$ such that the corresponding systems of projections of reproducing kernels in the model space
are complete/non complete. Furthermore, we give a compactness criterion for Hankel operators with symbol $\theta \overline B$, where $\theta$ is an inner function and $B$ is an interpolating Blaschke product 
and use this criterion to describe Riesz bases
$\mathcal K_{\Lambda, \theta}$, with $\lim_{\lambda \in \Lambda, |\lambda| \to 1} \theta(\lambda) =0$.
\end{abstract}

\maketitle
\section{Introduction and main results}
Let $\theta$ be an inner function in the sense of Beurling, $\Lambda = \{\lambda_k\}_{k \ge 0}$
be a sequence of distinct points in the open unit disc  $\D$ satisfying the Blaschke condition, 
\begin{equation}
\sum_{\lambda\in\Lambda}(1-|\lambda|)<\infty, 
\tag{B}
\end{equation} 
(from now on we write $\Lambda \in (B)$ if a set $\Lambda$ satisfies the Blaschke condition $(B)$),
and let $B_\Lambda$ be the corresponding Blaschke product. We are interested in geometric properties ((uniform) minimality, being a Riesz sequen\-ce/basis, being an asymptotically orthonormal basis) of the reproducing kernel family 
$$
\mathcal K_{\Lambda, \theta}=\Bigl\{k_\theta(\lambda, z) =P_\theta k(\lambda, z) = \frac {1-\overline {\theta(\lambda)} \theta(z)} {1-\overline {\lambda} z}\Bigr\}_{\lambda \in \Lambda}
$$ 
in the model space $K_\theta=H^2 \ominus \theta H^2$.
Here $P_\theta$ 
is the operator of orthogonal projection 
onto $K_\theta$, 
and $k(\lambda, z) =  \frac {1} {1-\overline {\lambda} z}$ is the reproducing kernel in the Hardy space $H^2$. B.Pavlov noticed (see  \cite[Vol.2, Part D, Section 4.4]{11}, \cite{2}, \cite{7}) that if  $\sup_{\lambda \in \Lambda} |\theta(\lambda)| <1$ and $P_\theta\vert K_{B_\Lambda}$ is an isomorphism onto its image, then the family $\mathcal K_{\Lambda, \theta}$ inherits some geometric properties of the family $\mathcal K_{\Lambda}=\{k(\lambda, \cdot) : \lambda \in \Lambda\} $.
Furthermore,  
$P_\theta\vert K_{B_\Lambda}$ is an isomorphism onto its image
if and only if 
\begin{equation}
\dist_{L^\infty}(\theta, B_{\Lambda}H^\infty)<1.
\label{distless1}
\end{equation} 

Notice that under condition \eqref{distless1}, we have $\sup_{\lambda \in \Lambda} |\theta(\lambda)| <1$.


In \cite{4} and \cite{5} we studied relationships between condition \eqref{distless1} and the so called Schur--Nevanlinna coefficients and functions. 
Given $\mu \in \D$, denote by  $\tau_\mu$ the corresponding M\"obius transformation, $\tau_\mu(z)=\frac{z-\mu}{1-\overline \mu z}$. Fix a function $\theta$ in the closed unit ball $\mathcal B$ of $H^\infty$ and a sequence $\Lambda=\{\lambda_n\}_{n\ge 0}$ of points in the unit disc.
 We define the
Schur--Nevanlinna functions $\{\theta_n\}_{n \ge 0}$ and the Schur--Nevanlinna coefficients $\{\gamma_n\}_{n \ge 0}$ as follows: 
$\theta_0 =\theta$, $\gamma_0 =\theta_0(\lambda _0)$ and 
\begin{equation}
\theta_n =  \frac{\tau_{\gamma_{n-1}}(\theta_{n-1})}{\tau_{\lambda_{n-1}}},\qquad 
\gamma_{n}  =\theta_{n}(\lambda _{n}),\qquad n\ge 1.
\label{Schur}                             
\end{equation}
The Schwarz lemma gives immediately that $\theta_n\in\mathcal B$, $n\ge 1$.
 
In \cite{4} we established that if 
\begin{equation}
\sum_{n \ge 0} |\gamma_n| < \infty,
\label{sumConverges}
\end{equation} 
then condition \eqref{distless1} is fulfilled.

Given $\mu \in \D \setminus \Lambda$, we set $l_\mu = \frac {\sqrt{1-|\mu|^2}} {1-\overline \mu z} B_\Lambda \in K_{B_{\Lambda \bigcup \{\mu\}}}$. In  \cite[Corolary 3.5]{5}  we showed that 
$$
\dist_{H^2}^2(P_\theta l_{\mu}, \spanmat\mathcal  K_{\Lambda , \theta}) =  
(1-|\theta(\mu)|^2) \prod_{k=1}^{{\infty}} \frac {1-|\theta_k(\mu)|^2}{1-|\theta_k(\lambda_{n})|^2|b_{\lambda_{k-1}}(\mu)|^2},
$$
where $\spanmat X$ denotes the closed linear hull of a family $X$.

Using \eqref{usefulidentity} below, we get
\begin{equation}
\dist_{H^2}^2(P_\theta l_{\mu}, \spanmat\mathcal  K_{\Lambda , \theta}) =  
\lim_{n\to\infty} (1-|\theta_n(\mu)|^2)  \prod_{k=0}^{n-1} \frac {|1-\overline \gamma_k \theta_k(\mu)|^2}{1- | \gamma_k|^2}.
\label{distlmu}
\end{equation}

Since $l_\mu$ are unit vectors orthogonal to $K_{B_\Lambda}$, we have $\dist( l_{\mu}, K_{B_\Lambda})=1$.  If $P_\theta\vert K_{B_{\Lambda \bigcup \{\mu\}}}$ is an isomorphism 
 into $K_\theta$, then the distance between the corresponding images
 is separated from $0$, that is
$$
\dist_{H^2} (P_\theta l_{\mu}, \spanmat\mathcal  K_{\Lambda , \theta}) >0.
$$

Under condition \eqref{sumConverges}  the latter means that $ \lim_{n \to \infty} |\theta_n(\mu)| <1$.

In Section 3 of this paper, we consider first
the case when condition \eqref{sumConverges} is not necessarily fulfilled. 
\begin{proposition}
\label{Pr1}
Let $\Lambda=\{\lambda _n\}_{n \ge 0} \in (B)$ 
and let $\theta$ be an inner function. Suppose that $\mathcal K_{\Lambda, \theta}$ is complete in $K_\theta$.
Then $\sum_{n \ge 1} |\theta_n(\mu)|^2=\infty$ for every $ \mu \in \D \setminus \Lambda$.
\end{proposition}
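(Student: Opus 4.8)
The plan is to read the conclusion off formula \eqref{distlmu} via the trivial remark that an element of a complete family's closed span lies in that span. If $\mathcal K_{\Lambda,\theta}$ is complete then $\spanmat\mathcal K_{\Lambda,\theta}=K_\theta$, and since $P_\theta l_\mu\in K_\theta$ for every $\mu\in\D\setminus\Lambda$, we have $\dist_{H^2}(P_\theta l_\mu,\spanmat\mathcal K_{\Lambda,\theta})=0$. Feeding this into \eqref{distlmu} gives
\begin{equation*}
\lim_{n\to\infty}(1-|\theta_n(\mu)|^2)\prod_{k=0}^{n-1}\frac{|1-\overline{\gamma_k}\theta_k(\mu)|^2}{1-|\gamma_k|^2}=0 ,
\end{equation*}
so the whole task is to extract the divergence of $\sum_n|\theta_n(\mu)|^2$ from the vanishing of this limit.

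To make the limit transparent I would first telescope the product. Using the elementary M\"obius identity $1-|\tau_a(b)|^2=\frac{(1-|a|^2)(1-|b|^2)}{|1-\overline a b|^2}$ with $a=\gamma_k$, $b=\theta_k(\mu)$, together with the Schur recursion \eqref{Schur} evaluated at $\mu$ (which yields $\tau_{\gamma_k}(\theta_k(\mu))=\theta_{k+1}(\mu)\,\tau_{\lambda_k}(\mu)$, hence $|\tau_{\gamma_k}(\theta_k(\mu))|=|\theta_{k+1}(\mu)|\,|b_{\lambda_k}(\mu)|$), one gets
\begin{equation*}
\frac{|1-\overline{\gamma_k}\theta_k(\mu)|^2}{1-|\gamma_k|^2}=\frac{1-|\theta_k(\mu)|^2}{1-|\theta_{k+1}(\mu)|^2|b_{\lambda_k}(\mu)|^2}.
\end{equation*}
After cancellation the displayed limit becomes the product of \cite[Corollary 3.5]{5}, namely
\begin{equation*}
(1-|\theta(\mu)|^2)\prod_{k=1}^{\infty}\frac{1-|\theta_k(\mu)|^2}{1-|\theta_k(\mu)|^2|b_{\lambda_{k-1}}(\mu)|^2}=0 .
\end{equation*}
Since $1-|\theta(\mu)|^2>0$ for $\mu\in\D$, the infinite product itself must vanish.

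Now set $a_k=|\theta_k(\mu)|^2$ and $c_k=|b_{\lambda_{k-1}}(\mu)|^2\in[0,1)$. Each factor is $\le1$ because $c_k\le1$, so the product decreases and its vanishing is equivalent to $\sum_{k\ge1}\log\frac{1-a_kc_k}{1-a_k}=+\infty$, a series of nonnegative terms. The purely analytic step is then the comparison $\log\frac{1-a_kc_k}{1-a_k}=\log(1-a_kc_k)-\log(1-a_k)\le-\log(1-a_k)$, valid because $\log(1-a_kc_k)\le0$. Hence $\sum_{k\ge1}\bigl(-\log(1-a_k)\bigr)=+\infty$, which for a sequence $a_k\in[0,1)$ is equivalent to $\sum_{k\ge1}a_k=\sum_{k\ge1}|\theta_k(\mu)|^2=\infty$, the desired conclusion.

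The step I expect to demand the most care is guaranteeing $a_k=|\theta_k(\mu)|^2<1$ for every $k$, since the equivalence $\sum(-\log(1-a_k))=\infty\Longleftrightarrow\sum a_k=\infty$ fails if some $a_k=1$. For this I would check by induction that the Schur--Nevanlinna functions of an inner $\theta$ are themselves inner: $\tau_{\gamma_k}(\theta_k)$ is inner and vanishes at $\lambda_k$ (because $\gamma_k=\theta_k(\lambda_k)$), hence is divisible by the Blaschke factor $b_{\lambda_k}$, so the quotient $\theta_{k+1}$ is again inner, and moreover non-constant as long as $\theta$ is not a finite Blaschke product; consequently $|\theta_k(\mu)|<1$ on $\D$ and every logarithm above is finite. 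The finite Blaschke case, where some $\theta_k$ degenerates to a unimodular constant and the sum is infinite by the natural convention, I would dispose of separately.
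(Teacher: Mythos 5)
Your proof is correct and takes essentially the same route as the paper: the paper applies completeness to $\widetilde k_\theta(\mu,\cdot)$, reads off the vanishing of the product in \eqref{distNRKmuinfty} (whose prefactor $|B_\Lambda(\mu)|^2$ is nonzero since $\mu\notin\Lambda$ and $\Lambda\in (B)$), and then drops the sub-unit denominators to conclude $\prod_{n\ge 1}(1-|\theta_n(\mu)|^2)=0$, hence $\sum_{n\ge 1}|\theta_n(\mu)|^2=\infty$. You do the same thing starting from \eqref{distlmu} for $P_\theta l_\mu$ instead, telescoping back (your M\"obius identity is just \eqref{usefulidentity} in reverse) to the product of \cite[Corollary 3.5]{5} and phrasing the final factor-by-factor comparison in logarithms; these differences, along with your explicit check that the $\theta_n$ remain inner, are cosmetic rather than structural.
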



If condition \eqref{sumConverges} is  fulfilled, we can say much more. 


In  \eqref{distNRKmuinfty2} below, we obtain that
\begin{multline*}
\dist_{H^2}^2(\widetilde k_\theta(\mu, \cdot), \spanmat  \mathcal K_{\Lambda, \theta})  \\
= \frac  { |B_\Lambda(\mu)| ^2} {1-|\theta(\mu)|^2} \lim_{n\to\infty} (1-|\theta_n(\mu)|^2)  \prod_{k=0}^{n-1} \frac {|1-\overline \gamma_k \theta_k(\mu)|^2}{1- | \gamma_k|^2},
\end{multline*}
where $\widetilde k_\theta(\mu, \cdot)=\frac {k_\theta(\mu, \cdot)}{\|k_\theta(\mu, \cdot)\|_2}$ is the normalized reproducing kernel in $K_\theta$.

Since $\spanmat \mathcal K_{\Lambda, \theta} \neq K_{\theta}$ if and only if for every $ \mu \in \D \setminus \{\Lambda\} $
we have $ k_{\theta}(\cdot,\mu) \notin \spanmat \mathcal K_{\Lambda, \theta}$ (see Lemma B below), we obtain that under condition \eqref{sumConverges}, 
$\mathcal  K_{\Lambda , \theta}$  is complete in $K_\theta$ if and only if  $ \lim_{n \to \infty} |\theta_n(\mu)| =1$ for some/every $ \mu \in \D \setminus \{\Lambda\} $.
 
\subsection{Functions with given Schur--Nevanlinna coefficients}

Given a sequence $\Lambda$ and  a sequence $\Gamma=\{\gamma_n\}_{n \ge 0} $  satisfying  \eqref{sumConverges} we are going to construct two functions: an inner function $\theta$ and a function  $h\in \mathcal B$, whose Schur--Nevanlinna coefficients with respect to
  $\Lambda$ coincide with $\Gamma$ and which satisfy some additional conditions on the corresponding Schur--Nevanlinna functions.


\begin{lemma}
\label{L2}
Let $\Lambda=\{\lambda_n\}_{n \ge 0}\in (B)$, 
$\Gamma=\{\gamma_n\}_{n \ge 0}$ be a sequence of  points in $\D$ satisfying condition \eqref{sumConverges}, $\mu \in \D \setminus \Lambda$. Then 
\newline\noindent {\rm (i)} there exists a function $h\in \mathcal B$ such that its sequence of 
Schur--Nevanlinna  coefficients with respect to $\Lambda$ coincides with  $\Gamma$ and 
\newline\noindent $\sup_{n \ge 0} \|h_n\|_\infty<1$,
where $h_n$ is the $ n$-th  Schur--Nevanlinna function  for $(\Lambda,h)$;
\newline\noindent {\rm (ii)} there exists an inner function $\theta$ such that its sequence of Schur--Nevanlinna coefficients with respect to $\Lambda$ coincides with  $\Gamma$ 
and for every $\mu \in \D \setminus \Lambda$ we have
$\sup_n | \theta_n(\mu)|=1$,
  where $\theta_n$ is the $ n$-th  Schur--Nevanlinna function  for $(\Lambda, \theta)$.
\end{lemma}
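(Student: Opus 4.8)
The plan is to construct both functions by the \emph{Inverse Schur--Nevanlinna process}: for each $N$ I fix a ``tail'' at level $N$ and run the inverse of \eqref{Schur},
\[
\theta_{n-1}=\frac{\gamma_{n-1}+\tau_{\lambda_{n-1}}\,\theta_n}{1+\overline{\gamma_{n-1}}\,\tau_{\lambda_{n-1}}\,\theta_n},\qquad n=N,N-1,\dots,1,
\]
to obtain an approximant whose first $N$ Schur--Nevanlinna coefficients with respect to $\Lambda$ are $\gamma_0,\dots,\gamma_{N-1}$. Writing each step as $\theta_{n-1}=M_{n-1}(\theta_n)$ with $M_j(w)=\tau_{-\gamma_j}(\tau_{\lambda_j}w)$, the two parts differ only in the choice of tail, and the real work is to pass to the limit $N\to\infty$ and to read off the stated bounds. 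The identity $M_j(0)=\gamma_j$ together with $\tau_{\lambda_j}(\lambda_j)=0$ guarantees that the forward algorithm \eqref{Schur} applied to the approximant returns exactly the prescribed coefficients.

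For (i) I would take the constant tail $h^{(N)}_N\equiv\gamma_N$. Two estimates drive the argument. First, convergence: for fixed $z\in\D$ the map $w\mapsto(M_0\circ\cdots\circ M_{N-1})(w)(z)$ is a holomorphic self-map of $\D$, so by Schwarz--Pick the pseudohyperbolic distance satisfies $\rho\bigl(h^{(N)}_0(z),h^{(N+1)}_0(z)\bigr)\le\rho\bigl(\gamma_N,M_N(\gamma_{N+1})(z)\bigr)\le|\gamma_{N+1}|$; since $\sum|\gamma_k|<\infty$ the sequence $h^{(N)}_0$ is $\rho$-Cauchy uniformly on $\D$ and converges locally uniformly to some $h\in\mathcal B$ (not a unimodular constant, since $h(\lambda_0)=\gamma_0\in\D$). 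Second, the sup-norm bound: on $\T$ one has $|\tau_{\lambda_{n-1}}|=1$, so $\|h^{(N)}_{n-1}\|_\infty\le\max_{|w|\le\|h^{(N)}_n\|_\infty}|\tau_{-\gamma_{n-1}}(w)|=\frac{\|h^{(N)}_n\|_\infty+|\gamma_{n-1}|}{1+|\gamma_{n-1}|\,\|h^{(N)}_n\|_\infty}$, which in the variable $\operatorname{arctanh}$ is the \emph{additive} bound $\operatorname{arctanh}\|h^{(N)}_{n-1}\|_\infty\le\operatorname{arctanh}\|h^{(N)}_n\|_\infty+\operatorname{arctanh}|\gamma_{n-1}|$. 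Telescoping from the tail gives $\operatorname{arctanh}\|h^{(N)}_m\|_\infty\le\sum_{k\ge0}\operatorname{arctanh}|\gamma_k|=:S$, with $S<\infty$ because $\operatorname{arctanh}t\sim t$ near $0$ and each $|\gamma_k|<1$. Continuity of \eqref{Schur} at each fixed index then shows that $h$ has Schur--Nevanlinna coefficients exactly $\Gamma$ and that $h^{(N)}_m\to h_m$ locally uniformly, whence $\|h_m\|_\infty\le\tanh S<1$ uniformly in $m$, which is (i).

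For (ii) I would instead take inner tails, e.g. $\theta^{(N)}_N=B_{\{\lambda_k\}_{k\ge N}}$; since $\tau_{-\gamma}$ carries inner functions to inner functions and multiplication by $\tau_\lambda$ preserves innerness, every $\theta^{(N)}_0$ is inner. Passing to the limit (via the Inverse Schur--Nevanlinna process and the Lemma on unicity) produces $\theta$ with coefficients $\Gamma$. The property $\sup_n|\theta_n(\mu)|=1$ is then extracted from the tail behaviour: the Schur--Nevanlinna functions $\theta_n$ of $\theta$ have coefficient sequence $\{\gamma_n,\gamma_{n+1},\dots\}$ tending to $0$, so they approach the Blaschke tails $B_{\{\lambda_k\}_{k\ge n}}$, for which $|B_{\{\lambda_k\}_{k\ge n}}(\mu)|\to1$ by the Blaschke condition. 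Equivalently, since under \eqref{sumConverges} the product $\prod_{k}\frac{|1-\overline{\gamma_k}\theta_k(\mu)|^2}{1-|\gamma_k|^2}$ in \eqref{distlmu} converges to a finite nonzero value, $1-|\theta_n(\mu)|^2$ is comparable to $\dist_{H^2}^2(P_\theta l_\mu,\spanmat\mathcal K_{\Lambda,\theta})$, so arranging Blaschke tails forces this distance to vanish and yields $\lim_n|\theta_n(\mu)|=1$ for every $\mu\in\D\setminus\Lambda$.

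The main obstacle is precisely the innerness of the limit in (ii): a locally uniform limit of inner functions need not be inner (it may lose $H^2$-mass), and the telescoping of (i) fails here, because with $b=B_{\{\lambda_k\}_{k\ge N}}(z)$ close to $\partial\D$ the displacement $\rho\bigl(b,\tau_{-\gamma_N}(b)\bigr)$ need not be small. I would handle this by choosing the Blaschke tails \emph{coherently}, so that consecutive approximants are related by a single extra factor $\tau_{\lambda_N}$ rather than by an arbitrary inner perturbation, and by invoking the Inverse Schur--Nevanlinna process, which is designed to return an inner function from summable parameters; the summability $\sum|\gamma_k|<\infty$ is exactly what ensures both that the limit exists and that it stays inner rather than degenerating.
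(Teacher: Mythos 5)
Your part (i) is correct and coincides with the paper's own construction: the paper's Inverse Schur--Nevanlinna process I \eqref{ISN1} is exactly your recursion with constant seeds $h^{(N)}_N\equiv\gamma_N$, and the paper likewise reads the bound $\sup_n\|h_n\|_\infty<1$ off the summability \eqref{sumConverges} (via the boundary identity \eqref{hinduction} rather than your $\mathop{\rm arctanh}$-telescoping); your Schwarz--Pick contraction argument is in fact a small improvement on the paper's normal-families subsequence, since it gives convergence of the whole sequence.

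Part (ii) has a genuine gap, located exactly where you flag ``the main obstacle'': you never prove that the limit function is inner, nor that $\sup_n|\theta_n(\mu)|=1$. ``Invoking the Inverse Schur--Nevanlinna process, which is designed to return an inner function from summable parameters'' is not an argument, and as a statement it is false: process I, with the same summable parameters, returns a function with $\|h\|_\infty<1$, as far from inner as possible. The inference ``the coefficients of $\theta_n$ tend to $0$, so the $\theta_n$ approach the Blaschke tails'' is a non sequitur --- the function of part (i) has the very same coefficient tails yet satisfies $\sup_n\|h_n\|_\infty<1$. And ``arranging Blaschke tails forces this distance to vanish'' merely restates, through \eqref{distlmu}, the claim $\lim_n|\theta_n(\mu)|=1$ that has to be proven. (The ``Lemma on unicity'' you invoke also does not exist in the paper.)

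The two missing ideas, as implemented in the paper (process III, with seeds $h_{n,0}\equiv1$, so that the approximants $h_{n,n}$ are inner), are these. First, pointwise convergence to modulus one: telescoping the recursion gives
\[
h-h_{n,n}=(h_n-1)\prod_{k=0}^{n-1}\frac{(1-|\gamma_k|^2)\,\tau_{\lambda_k}}{(1+\overline{\gamma_k}\,h_{k+1}\tau_{\lambda_k})(1+\overline{\gamma_k}\,h_{n,n-k-1}\tau_{\lambda_k})},
\]
so by \eqref{sumConverges} one has $|h-h_{n,n}|\asymp|h_n-1|\,|B_{0,n-1}|$ on $\overline\D$; since $h_{n,n}\to h$ locally uniformly and $B_{0,n-1}(z)\to B_\Lambda(z)\neq0$ for $z\in\D\setminus\Lambda$, this yields $h_n(z)\to1$, which already contains the required property $\sup_n|\theta_n(\mu)|=1$. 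Second, innerness by contradiction: if $|h|<1-\varepsilon$ on a set $e\subset\T$ of positive measure, the boundary identity \eqref{hinduction} together with \eqref{sumConverges} forces $|h_n|<1-\delta\varepsilon$ on $e$ for every $n$, and then Jensen's formula (a two-constants, harmonic-measure estimate) keeps $|h_n(z)|\le 1-\delta\varepsilon\beta$ at any fixed $z\in\D\setminus\Lambda$, contradicting $h_n(z)\to1$. Your coherent Blaschke-tail seeds $B_{\{\lambda_k\}_{k\ge N}}$ would very likely go through these same two steps (the telescoped identity then gives $|h_N(z)-B_{\{\lambda_k\}_{k\ge N}}(z)|\to0$ while $|B_{\{\lambda_k\}_{k\ge N}}(z)|\to1$), but these two steps are precisely what your write-up does not supply.
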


It should be mentioned that, by the Schwarz lemma, $\sup_n | \theta_n(\mu)|=1$ for some $\mu \in \D $ if and only if  $\sup_n | \theta_n(\mu)|=1$ for every $\mu \in \D$.

Using Lemma~\ref{L2} we obtain the following result on the distances between $P_\theta l_\mu $ and $ \spanmat\mathcal  K_{\Lambda , \theta}$.
 
\begin{proposition}
\label{Th3}
Let $\Lambda=\{\lambda_n\}_{n \ge 0}\in (B)$, $\Gamma=\{\gamma_n\}_{n \ge 0}$ be a sequence of  points in $\D$ satisfying condition \eqref{sumConverges}.
Then
\newline\noindent {\rm (i)} there exists an inner function $\theta$ such that its sequence of Schur--Nevanlinna coefficients with respect to $\Lambda$ coincides with  $\Gamma$ and for every $w \in \D $ we have
$$
\lim_{n \to \infty} |\theta_n(w)|<1,
$$
and
$$
\dist_{H^2}^2(P_\theta l_{w}, \spanmat\mathcal  K_{\Lambda , \theta})  \asymp 1-\lim_{n \to \infty} |\theta_n(w)|^2
$$
uniformly in $w \in \D\setminus \Lambda$;
\newline\noindent {\rm (ii)} there exists an inner function $\theta$ such that its sequence of Schur--Nevanlinna coefficients with respect to $\Lambda$ coincides with  $\Gamma$ and  for every $w \in \D$ we have  
$$\lim_{n \to \infty}  |\theta_n(w)|=1
$$ 
and 
$$
P_\theta l_w \in \spanmat\mathcal  K_{\Lambda , \theta}.
$$                                                                                                                                                 
\end{proposition}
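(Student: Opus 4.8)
The plan is to reduce both parts to the distance identity \eqref{distlmu}, combined with a construction (via Lemma~\ref{L2} and the inverse Schur--Nevanlinna process) of an inner function with the prescribed coefficients $\Gamma$ and a prescribed asymptotic behaviour of $|\theta_n(w)|$. First I would isolate the elementary bound behind \eqref{distlmu}. For any $\theta\in\mathcal B$ with coefficients $\Gamma$, any $w\in\D$ and any $k$, the inequality $|\theta_k(w)|\le 1$ gives $1-|\gamma_k|\le|1-\overline{\gamma_k}\theta_k(w)|\le 1+|\gamma_k|$, whence
$$
\frac{1-|\gamma_k|}{1+|\gamma_k|}\le\frac{|1-\overline{\gamma_k}\theta_k(w)|^2}{1-|\gamma_k|^2}\le\frac{1+|\gamma_k|}{1-|\gamma_k|}.
$$
Since $\sum_k|\gamma_k|<\infty$, both extreme products converge to positive finite limits, so the partial products $\Pi_n(w)=\prod_{k=0}^{n-1}\frac{|1-\overline{\gamma_k}\theta_k(w)|^2}{1-|\gamma_k|^2}$ satisfy $c\le\Pi_n(w)\le C$ with $0<c\le C<\infty$ independent of $w$ and $n$, and (the logarithms of the factors being $O(|\gamma_k|)$) converge to some $\Pi(w)\in[c,C]$. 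As \eqref{distlmu} asserts that $\lim_n(1-|\theta_n(w)|^2)\Pi_n(w)$ exists, dividing by $\Pi_n(w)\to\Pi(w)>0$ shows that $\lim_n|\theta_n(w)|$ exists and
$$
\dist_{H^2}^2(P_\theta l_w,\spanmat\mathcal K_{\Lambda,\theta})=\Pi(w)\bigl(1-\lim_n|\theta_n(w)|^2\bigr),\qquad \Pi(w)\in[c,C].
$$

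Part (ii) then follows from Lemma~\ref{L2}(ii), which produces an inner $\theta$ with coefficients $\Gamma$ and $\sup_n|\theta_n(w)|=1$ for every $w$. Each $\theta_n$ is inner and satisfies $|\theta_n(\lambda_n)|=|\gamma_n|<1$, so it is non-constant and $|\theta_n(w)|<1$ on $\D$; thus the value $1$ is attained only along a subsequence, and since $\lim_n|\theta_n(w)|$ exists it must equal $1$. The displayed identity then forces $\dist_{H^2}(P_\theta l_w,\spanmat\mathcal K_{\Lambda,\theta})=0$, i.e. $P_\theta l_w\in\spanmat\mathcal K_{\Lambda,\theta}$ for every $w\in\D\setminus\Lambda$.

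For part (i) the analytic conclusion is immediate once the function is in hand: given an inner $\theta$ with coefficients $\Gamma$ such that $\lim_n|\theta_n(w)|<1$ for every $w$, the factor $\Pi(w)\in[c,C]$ in the displayed identity yields $\dist_{H^2}^2(P_\theta l_w,\spanmat\mathcal K_{\Lambda,\theta})\asymp 1-\lim_n|\theta_n(w)|^2$ uniformly in $w\in\D\setminus\Lambda$, with the absolute constants $c,C$. The whole weight of (i) therefore lies in the construction of such a $\theta$. Lemma~\ref{L2}(i) is not directly usable, since its output $h$ is non-inner (an inner function has $\|\theta_n\|_\infty=1$ for every $n$, so $\sup_n\|\theta_n\|_\infty<1$ is impossible); what it does certify is that $\Gamma$ is compatible with $\sup_n|h_n(w)|$ bounded away from $1$. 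I would instead run the inverse Schur--Nevanlinna recursion $\theta_{n-1}=\tau_{-\gamma_{n-1}}(\tau_{\lambda_{n-1}}\theta_n)$ with inner tails, arranging that the inner functions $\theta_n$ converge locally uniformly to a non-constant inner $\Theta$, so that $\lim_n|\theta_n(w)|=|\Theta(w)|<1$ simultaneously at every $w\in\D$.

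The main obstacle is exactly this construction: under $\sum_n|\gamma_n|<\infty$ one must show that the backward recursion, started from a suitable inner seed, converges to a well-defined inner $\theta=\theta_0$ whose coefficients are precisely $\Gamma$ and whose Schur--Nevanlinna functions approach a non-constant inner $\Theta$. Reconciling innerness with the non-completeness requirement $\lim_n|\theta_n(w)|<1$ is the delicate step, since a generic inner completion of the data tends to push $|\theta_n(w)|$ toward $1$ (the complete case of (ii)); the point is to steer the tail so that the limit $\Theta$ remains a genuine inner function bounded away from modulus $1$ inside $\D$. Once this is secured, the uniform asymptotic in (i) is a mechanical consequence of \eqref{distlmu} and the product bounds above, and the completeness criterion quoted in the introduction guarantees that $\lim_n|\theta_n(w)|<1$ is indeed a property holding for every $w$ at once.
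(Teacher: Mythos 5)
Your reduction is sound where it is actually carried out: the product bounds under \eqref{sumConverges}, the existence of $\lim_n|\theta_n(w)|$, the resulting identity $\dist_{H^2}^2(P_\theta l_w,\spanmat\mathcal K_{\Lambda,\theta})=\Pi(w)\bigl(1-\lim_n|\theta_n(w)|^2\bigr)$ with $\Pi(w)\in[c,C]$, and all of part (ii) (Lemma~\ref{L2}(ii) plus \eqref{distlmu}; your detour through $\sup_n|\theta_n(w)|=1$ is harmless, and in fact the paper's construction for Lemma~\ref{L2}(ii) yields $h_n(z)\to 1$ directly) match the paper's argument. The genuine gap is part (i): you never produce the inner function, and you say so yourself (``the main obstacle is exactly this construction''). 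Since everything else in (i) is, as you put it, mechanical, that obstacle \emph{is} part (i), so what you have is a correct reduction, not a proof. Moreover, the route you sketch --- seeding the backward Schur--Nevanlinna recursion with inner tails and steering the limit --- is unpromising: a locally uniform limit of inner functions need not be inner, and the paper's only device for proving innerness of such a limit (the Jensen-formula argument in the proof of Lemma~\ref{L2}(ii)) works precisely \emph{because} $|h_n(z)|\to 1$ there, i.e.\ in the complete case; innerness of the limit and $\lim_n|\theta_n(w)|<1$ pull in opposite directions inside that framework.

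The idea you are missing is an Adamyan--Arov--Krein correction, applied to the very function you discarded as ``not directly usable.'' Fix $\mu\in\D\setminus\Lambda$ and let $h$ be the function of Lemma~\ref{L2}(i), so $\|h\|_\infty<1$ and $\sup_n\|h_n\|_\infty<1$. Then $\|h\,\overline{b_\mu B_\Lambda}\|_\infty<1$, so the Adamyan--Arov--Krein theorem gives a unimodular $\varphi\in h\,\overline{b_\mu B_\Lambda}+H^\infty$; put $\theta:=\varphi\, b_\mu B_\Lambda$. Then $\theta\in h+b_\mu B_\Lambda H^\infty$, so $\theta$ is analytic and unimodular a.e.\ on $\T$, hence inner. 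Because $\theta-h\in B_\Lambda H^\infty$, an induction along the recursion \eqref{Schur} shows the Schur--Nevanlinna coefficients of $\theta$ with respect to $\Lambda$ are again $\Gamma$; because $\theta-h$ is in addition divisible by $b_\mu$, the same induction gives $\theta_n(\mu)=h_n(\mu)$ for all $n$, whence $\sup_n|\theta_n(\mu)|\le\sup_n\|h_n\|_\infty<1$. The Schwarz-lemma remark following Lemma~\ref{L2} (equivalently, the completeness criterion you invoke at the end) upgrades this to $\sup_n|\theta_n(w)|<1$ for every $w\in\D$, and then your mechanical part finishes (i). In short: innerness is reconciled with non-completeness not by controlling the whole backward recursion, but by prescribing the single-point data $\theta_n(\mu)=h_n(\mu)$ through a correction inside the coset $h+b_\mu B_\Lambda H^\infty$.
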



We say that a sequence $\Lambda=\{\lambda_n\}_{n\ge 0} $ of points in $\D$ satisfies the Carleson condition and write 
$\Lambda \in (C)$ if  
$$ 
\inf_n \prod_{m \neq n}\Bigl| \frac {\lambda_m-\lambda_n}{1-\overline \lambda_m \lambda_n}\Bigr| > 0. 
$$

Set 
\begin{gather*}
\widetilde k(\lambda, z)= \frac {k(\lambda, z)}{\|k(\lambda, z)\|_2}= \frac {\sqrt{1-|\lambda|^2}} {1-\overline {\lambda} z},\\
\widetilde  {\mathcal K}_{\Lambda}=\{\widetilde k(\lambda, \cdot)\}_{\lambda \in \Lambda},\quad \widetilde  {\mathcal K}_{\Lambda, \theta}=\{\widetilde k_\theta(\lambda, \cdot)\}_{\lambda \in \Lambda}.
\end{gather*} 
Notice that the Carleson condition on $\Lambda$ is sufficient for the family $  \widetilde  {\mathcal K}_{\Lambda}$ 
to be a Riesz sequence and is necessary  for the families $  {\mathcal K}_{\Lambda}$, $   {\mathcal K}_{\Lambda, \theta}$ to be uniformly minimal
(see \cite[Vol.2, Part D, Lemma 4.4.2]{11} for $ {\mathcal K}_{\Lambda}$ and \cite{5} for  ${\mathcal K}_{\Lambda, \theta}$).

The following result shows that for a fixed  $\Lambda  \in (C) $ different geometric behavior of ${\mathcal K}_{\Lambda, \theta}$ is possible.
\begin{theorem}
\label{Th4}
Let $\Lambda  \in (C) $, $\Gamma=\{\gamma_n\}_{n \ge 0}$ be a sequence of  points in $\D$ satisfying condition \eqref{sumConverges}. Then
\newline\noindent {\rm (i)} there exists an inner function $\theta$ such that its sequence of Schur--Nevanlinna coefficients with respect to $\Lambda$ coincides with  $\Gamma$,  $  \widetilde  {\mathcal K}_{\Lambda, \theta}$ is a non complete in $K_\theta$ Riesz sequence, 
for every $\mu \in \D\setminus \Lambda$ we have
$$
\lim_n |\theta_n(\mu)|<1, 
$$
and
$$
 \frac {\dist_{H^2}^2(P_{\theta} \widetilde  {k}(\mu, \cdot), \spanmat\mathcal  K_{\Lambda , \theta})}{\dist_{H^2}^2( \widetilde k(\mu, \cdot), K_{B_\Lambda})} \asymp 1-\lim_n |\theta_n(\mu)|^2, 
$$
uniformly in $\mu \in \D\setminus \Lambda$;
\newline\noindent {\rm (ii)} there exists an inner function $\theta$ such that its sequence of Schur--Nevanlinna coefficients with respect to $\Lambda$ coincides with  $\Gamma$, $  \widetilde  {\mathcal K}_{\Lambda, \theta}$ is a Riesz basis in $K_\theta$,
 and for every  $\mu \in \D\setminus \Lambda$ we have $\lim_n |\theta_n(\mu)|=1$.    
\end{theorem}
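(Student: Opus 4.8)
The plan is to extract both inner functions \emph{directly} from Proposition~\ref{Th3}, so that the genuinely new content of the theorem lies entirely in upgrading the normalized families $\widetilde{\mathcal K}_{\Lambda, \theta}$ to Riesz sequences, respectively Riesz bases, using the stronger hypothesis $\Lambda \in (C)$. Since $(C) \subset (B)$, Proposition~\ref{Th3} applies verbatim and supplies, in its two cases, an inner function $\theta$ whose Schur--Nevanlinna coefficients along $\Lambda$ equal $\Gamma$ and which realizes the dichotomy $\lim_n |\theta_n(\mu)| < 1$ (case (i)), or $\lim_n |\theta_n(\mu)| = 1$ (case (ii)), for every $\mu \in \D \setminus \Lambda$; together with the distance asymptotics for $P_\theta l_w$ in case (i). It then remains to analyze the geometry of $\widetilde{\mathcal K}_{\Lambda, \theta}$ for these particular $\theta$.

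The Riesz sequence property is common to both parts and is where the Carleson condition enters. Because $\Gamma$ obeys \eqref{sumConverges}, the result of \cite{4} guarantees \eqref{distless1}, that is $\dist_{L^\infty}(\theta, B_\Lambda H^\infty) < 1$; equivalently $P_\theta|_{K_{B_\Lambda}}$ is an isomorphism onto its image, and in particular $\sup_{\lambda \in \Lambda} |\theta(\lambda)| < 1$. On the other hand $\Lambda \in (C)$ makes $\widetilde{\mathcal K}_\Lambda$ a Riesz basis of its closed span $K_{B_\Lambda}$. I would then invoke Pavlov's transfer principle: the image of a Riesz basis under an isomorphism onto its image is a Riesz sequence, so $\{P_\theta \widetilde k(\lambda, \cdot)\}_{\lambda \in \Lambda}$ is a Riesz sequence in $K_\theta$. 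Since $P_\theta \widetilde k(\lambda, \cdot) = \sqrt{1 - |\theta(\lambda)|^2}\, \widetilde k_\theta(\lambda, \cdot)$ and the scalars $\sqrt{1 - |\theta(\lambda)|^2}$ are bounded above by $1$ and below by $\sqrt{1 - \sup_{\lambda}|\theta(\lambda)|^2} > 0$, rescaling by these factors preserves the Riesz sequence property, whence $\widetilde{\mathcal K}_{\Lambda, \theta}$ is itself a Riesz sequence.

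For part (i), non-completeness follows from the completeness criterion recorded above (through Lemma~B and the displayed formula for $\dist_{H^2}(\widetilde k_\theta(\mu, \cdot), \spanmat \mathcal K_{\Lambda, \theta})$): under \eqref{sumConverges} the family is complete if and only if $\lim_n |\theta_n(\mu)| = 1$, and here the limit is $< 1$. For the distance ratio I would reduce everything to \eqref{distlmu} by tracking normalizations. One checks that $\dist_{H^2}^2(\widetilde k(\mu, \cdot), K_{B_\Lambda}) = |B_\Lambda(\mu)|^2$, the squared norm of the projection of $\widetilde k(\mu, \cdot)$ onto $B_\Lambda H^2$, and that $P_\theta \widetilde k(\mu, \cdot) = \sqrt{1 - |\theta(\mu)|^2}\, \widetilde k_\theta(\mu, \cdot)$. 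Feeding these into the displayed expression for $\dist_{H^2}^2(\widetilde k_\theta(\mu, \cdot), \spanmat \mathcal K_{\Lambda, \theta})$, the factors $|B_\Lambda(\mu)|^2$ and $1 - |\theta(\mu)|^2$ cancel and the ratio collapses exactly to $\dist_{H^2}^2(P_\theta l_\mu, \spanmat \mathcal K_{\Lambda, \theta})$; Proposition~\ref{Th3}(i) then delivers the equivalence $\asymp 1 - \lim_n |\theta_n(\mu)|^2$, uniformly in $\mu \in \D \setminus \Lambda$.

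For part (ii) the same argument shows $\widetilde{\mathcal K}_{\Lambda, \theta}$ is a Riesz sequence, while $\lim_n |\theta_n(\mu)| = 1$ and the completeness criterion force $\mathcal K_{\Lambda, \theta}$ to be complete in $K_\theta$; a complete Riesz sequence is a Riesz basis, which is the assertion. The main obstacle I anticipate is not a single estimate but the careful verification that Pavlov's transfer principle genuinely survives the passage to normalized kernels: the uniform lower bound $\inf_{\lambda}(1 - |\theta(\lambda)|^2) > 0$ furnished by \eqref{distless1} is precisely what prevents the Riesz bounds from degrading when moving between $\{P_\theta \widetilde k(\lambda, \cdot)\}$ and $\widetilde{\mathcal K}_{\Lambda, \theta}$, and the same uniform control underlies the cancellation of normalizing constants in the distance ratio of part (i).
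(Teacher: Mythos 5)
Your proposal is correct and follows essentially the same route as the paper: both take the inner functions supplied by Proposition~\ref{Th3}, obtain the Riesz sequence/basis property from $\Lambda\in(C)$ together with $\dist_{L^\infty}(\theta,B_\Lambda H^\infty)<1$ (guaranteed by \eqref{sumConverges}), and settle completeness/non-completeness and the distance ratio via Lemma~B, Theorem~A, and \eqref{distNRKmuinfty2}. The only cosmetic differences are that you unpack Theorem~D into Pavlov's transfer principle plus a rescaling of the kernels where the paper simply cites Theorem~D, and that you bound the ratio by routing it through \eqref{distlmu} and Proposition~\ref{Th3}(i) instead of estimating the product $\prod_{k}|1-\overline{\gamma}_k\theta_k(\mu)|^2/(1-|\gamma_k|^2)$ directly from \eqref{sumConverges}.
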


It is interesting to relate Theorem~\ref{Th4} to
the following question posed by Nikolski and partially resolved by Dyakonov \cite{10} : given an arbitrary inner function $\theta$,  does the corresponding model space $K_\theta$ always possess an unconditional basis of reproducing kernels ?
In other words ({see Theorem~C} below), given an inner function $\theta$, does there always exist a set $\Lambda$ satisfying the Carleson condition such that 
$ \dist_{L^\infty}(\theta, B_\Lambda H^\infty)<1$ and $ \dist_{L^\infty}(B_\Lambda, \theta H^\infty)<1$ ?


\subsection{Vanishing condition}
Here we consider the situation when  
 \begin{equation}
 \lim_{n \to \infty}   \theta(\lambda_n)=0.
 \label{limzero}
 \end{equation}
 
 We show that  condition \eqref{limzero} characterizes compact Hankel operators with symbols of the form $\theta \overline {B_\Lambda}$, $\Lambda \in (C)$:
 
 \begin{proposition} 
 \label{L5}
Let $\Lambda=\{\lambda_n\}_{n \ge 0} \in (C)$, $\theta$ be an  inner function. Then the following 
assertions are equivalent.
\newline\noindent {\rm (i)} $\lim_{n\to\infty} \theta(\lambda_n)=0$. 
\newline\noindent {\rm (ii)}  The Hankel operatop $H_{\theta \overline {B_\Lambda}}$ is compact.
\newline\noindent {\rm (iii)} The operator $\theta(T_{B_\Lambda}) \vert K_{B_\Lambda}$ (see Section 2.4 below) is compact.
\end{proposition}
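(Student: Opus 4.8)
The plan is to prove the cyclic equivalence by establishing (ii) $\Leftrightarrow$ (iii) and (iii) $\Leftrightarrow$ (i) separately: the first is a structural operator identity relating the Hankel operator to the model operator, and the second is a spectral argument that uses the Carleson condition in an essential way. Throughout, write $B=B_\Lambda$, let $P_B$ be the orthogonal projection onto $K_B$, let $M_\theta$ denote multiplication by $\theta$, and recall that $\theta(T_B)\vert K_B = P_B M_\theta\vert K_B=:A$. Let $P_-$ be the orthogonal projection onto $H^2_-=L^2\ominus H^2$, so that $H_{\theta\overline B}f = P_-(\theta\overline B f)$.

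For (ii) $\Leftrightarrow$ (iii), I would first note that $H_{\theta\overline B}$ annihilates $BH^2$: if $f=Bg$ with $g\in H^2$, then $\theta\overline B f=\theta g\in H^2$, so $H_{\theta\overline B}f=0$; hence $H_{\theta\overline B}=H_{\theta\overline B}P_B$ and only the restriction to $K_B$ matters. For $f\in K_B$, decompose $\theta f = Af + Bh$ with $Af=P_B(\theta f)\in K_B$ and $h\in H^2$, so that $\theta\overline B f=\overline B\,Af + h$ and therefore $H_{\theta\overline B}f=P_-(\overline B\,Af)=\overline B\,Af$. The last equality holds because $Af\in K_B$ forces $\overline B\,Af\perp H^2$, i.e. $\overline B\,Af\in H^2_-$. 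Consequently the map $V\colon K_B\to H^2_-$, $Vg=\overline B g$, is an isometry (so $V^*V=I_{K_B}$), and we obtain the factorization $H_{\theta\overline B}=VAP_B$ together with $A=V^*H_{\theta\overline B}\vert K_B$. Since $V$ is an isometry and $P_B$ a projection, these two formulas show that $H_{\theta\overline B}$ is compact if and only if $A$ is compact. (Alternatively, this equivalence can be read off from Hartman's theorem once one identifies when $\theta\overline B\in H^\infty+C$, but the factorization above is self-contained and avoids that detour.)

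For (iii) $\Leftrightarrow$ (i), the key observation is that the reproducing kernels are eigenvectors of $A^*=P_B M_{\overline\theta}\vert K_B$ at the points of $\Lambda$. Since each $\lambda_n$ is a zero of $B$, one has $(P_B(\theta f))(\lambda_n)=\theta(\lambda_n)f(\lambda_n)$ for $f\in K_B$, whence $\langle A^*k_B(\lambda_n,\cdot),f\rangle=\overline{(Af)(\lambda_n)}=\overline{\theta(\lambda_n)}\,\overline{f(\lambda_n)}$ and therefore $A^*k_B(\lambda_n,\cdot)=\overline{\theta(\lambda_n)}\,k_B(\lambda_n,\cdot)$. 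Because $\Lambda\in(C)$, the normalized kernels $\{\widetilde k(\lambda_n,\cdot)\}$ form a Riesz basis of $K_B$ (and for $\lambda_n\in\Lambda$ they coincide with the normalized kernels of $K_B$). Letting $W\colon\ell^2\to K_B$ be a bounded invertible operator carrying the standard basis onto this Riesz basis, the eigenrelation yields $W^{-1}A^*W=D$, where $D$ is the diagonal operator with entries $\overline{\theta(\lambda_n)}$; this holds on the spanning Riesz basis and hence everywhere by boundedness. A diagonal operator is compact exactly when its entries tend to $0$, so $A^*$, equivalently $A$, is compact if and only if $\theta(\lambda_n)\to 0$.

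The two genuinely delicate points are, first, the verification that $g\mapsto\overline B g$ is an isometry of $K_B$ into $H^2_-$ (equivalently, that $K_B$ is precisely the set of $g\in H^2$ with $\overline B g\in H^2_-$), which is what transports compactness between the Hankel picture and the model picture; and second, the essential use of the Carleson condition. I expect the main obstacle to be making the compactness transfer in (iii) $\Leftrightarrow$ (i) rigorous: the eigenvector relation alone only shows $A^*$ is ``diagonal'' on a complete minimal system, and it is exactly the upgrade to a genuine Riesz basis — valid only under $\Lambda\in(C)$ — that allows the similarity $W^{-1}A^*W=D$ and hence the criterion ``compact iff entries vanish.'' Without $(C)$ this diagonalization, and with it the equivalence, would fail.
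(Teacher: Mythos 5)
Your proof is correct, and part of it takes a genuinely different route from the paper's. The equivalence (ii) $\Leftrightarrow$ (iii) is in substance the paper's argument: your factorization $H_{\theta\overline B}=VAP_B$, with $V=M_{\overline B}\vert K_B$ an isometry into $H^2_-$, is a self-contained derivation of the connection formula $\theta(T_B)P_B=M_BH_{\theta\overline B}$ that the paper simply quotes; you exploit it in both directions, whereas the paper, arguing cyclically (i)$\Rightarrow$(ii)$\Rightarrow$(iii)$\Rightarrow$(i), needs only (ii)$\Rightarrow$(iii). The real divergence is in linking (i) to compactness. The paper proves (iii)$\Rightarrow$(i) by noting that the normalized kernels $\widetilde k(\lambda_n,\cdot)$ are eigenvectors of $[\theta(T_B)]^*$ with eigenvalues $\overline{\theta(\lambda_n)}$ and that a Riesz basis is weakly null, so compactness forces $|\theta(\lambda_n)|\to 0$; and it proves (i)$\Rightarrow$(ii) by entirely different tools: Carleson's interpolation theorem gives $\dist_{L^\infty}(\theta,B_{n+1,\infty}H^\infty)\to 0$, hence $\theta\overline{B_\Lambda}\in H^\infty+C$ (a closed subalgebra of $L^\infty$), and Hartman's theorem then yields compactness of the Hankel operator. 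You replace both of these steps by the single similarity $W^{-1}A^*W=D$ with $D=\mathrm{diag}\,(\overline{\theta(\lambda_n)})$, which is legitimate ($A^*W$ and $WD$ are bounded and agree on the standard basis vectors, and compactness transfers both ways by the ideal property of the compacts), and which gives (i)$\Leftrightarrow$(iii) in one stroke. Your route is more elementary and self-contained --- no $H^\infty+C$, no Hartman --- and it makes transparent that the Carleson condition enters exactly by providing the Riesz basis and hence the similarity. What the paper's longer route buys is the explicit conclusion $\theta\overline{B_\Lambda}\in H^\infty+C$, which is the form of the statement actually invoked later (in the proof of Theorem~6, together with the lemma on unimodular functions in $H^\infty+C$); from your proof that membership is recovered only a posteriori, by applying Hartman's theorem to the now-compact Hankel operator.
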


It is known (see \cite{2}) that if $\Lambda \in (C)$,  $\theta$ is not a pure Blaschke product, and condition \eqref{limzero} is fulfilled, then 
$\widetilde {\mathcal  K}_{\Lambda, \theta } $ is a Riesz  basis in the closure of its linear hull, and it is not complete in
$K_\theta$. If 
$\theta$ is a pure Blaschke product, then $\widetilde {\mathcal  K}_{\Lambda, \theta } $ can be a Riesz basis, for example, in a special situation described in the following theorem.

\begin{theorem}
\label{Th6}
Let $\Lambda=\{\lambda_n\}_{n \ge 0} \in (C)$, 
$\mathcal M=\{\mu_n\}_{n \ge 0}  \in (C)$,  
be such that 
$\lim_{n \to \infty}B_\mathcal M(\lambda_n)=0$. Suppose that 
$\widetilde {\mathcal  K}_{\Lambda, B_\mathcal M}$ is a Riesz basis in $K_{B_\mathcal M}$. Then
$\lim_{n \to \infty}B_\Lambda(\mu_n)=0$ and
$\widetilde {\mathcal  K}_{\mathcal M, B_\Lambda}$ is a Riesz basis in $K_{B_\Lambda}$.  
\end{theorem}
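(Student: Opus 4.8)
The plan is to split the conclusion into two parts: the Riesz basis statement, which is a formal consequence of a symmetry already present in the hypotheses, and the vanishing statement $\lim_n B_\Lambda(\mu_n)=0$, which is the substantive point. Throughout I write $u=\overline{B_\Lambda}B_\mathcal M$, so that $\bar u=\overline{B_\mathcal M}B_\Lambda$.

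First I would record what the Riesz basis hypothesis buys. By the known description of Riesz bases of reproducing kernels (namely, $\widetilde{\mathcal K}_{\Lambda,\theta}$ is a Riesz basis in $K_\theta$ precisely when $\Lambda\in(C)$ together with $\dist_{L^\infty}(\theta,B_\Lambda H^\infty)<1$ and $\dist_{L^\infty}(B_\Lambda,\theta H^\infty)<1$, the two conditions recalled above in connection with Dyakonov's question), the assumption that $\widetilde{\mathcal K}_{\Lambda,B_\mathcal M}$ is a Riesz basis in $K_{B_\mathcal M}$ is equivalent to $\Lambda\in(C)$, $\dist_{L^\infty}(B_\mathcal M,B_\Lambda H^\infty)<1$, and $\dist_{L^\infty}(B_\Lambda,B_\mathcal M H^\infty)<1$. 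Via Nehari's theorem these last two conditions read $\|H_u\|<1$ and $\|H_{\bar u}\|<1$. They are symmetric in the pair $(\Lambda,\mathcal M)$; since $\mathcal M\in(C)$ as well, the same criterion with the roles of $\Lambda$ and $\mathcal M$ exchanged immediately gives that $\widetilde{\mathcal K}_{\mathcal M,B_\Lambda}$ is a Riesz basis in $K_{B_\Lambda}$. This settles the Riesz basis part and does not use the vanishing hypothesis at all.

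For the vanishing statement I would pass to Hankel operators through Proposition~\ref{L5}. Since $\Lambda\in(C)$, the hypothesis $\lim_n B_\mathcal M(\lambda_n)=0$ is equivalent to compactness of $H_{B_\mathcal M\overline{B_\Lambda}}=H_u$; and since $\mathcal M\in(C)$, the desired conclusion $\lim_n B_\Lambda(\mu_n)=0$ is equivalent to compactness of $H_{B_\Lambda\overline{B_\mathcal M}}=H_{\bar u}$. Thus everything reduces to the implication: if $H_u$ is compact, then $H_{\bar u}$ is compact. For a general (even unimodular) symbol this is false, as compactness of a Hankel operator is not stable under conjugation of the symbol, so the two distance bounds must enter here. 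The mechanism is as follows. From the two Toeplitz--Hankel identities $T_{\bar u}T_u=I-H_u^*H_u$ and $T_uT_{\bar u}=I-H_{\bar u}^*H_{\bar u}$ (valid because $u$ is unimodular), the bounds $\|H_u\|<1$ and $\|H_{\bar u}\|<1$ yield $T_{\bar u}T_u\ge(1-\|H_u\|^2)I>0$ and $T_uT_{\bar u}\ge(1-\|H_{\bar u}\|^2)I>0$, whence $T_u$ is bounded below with dense range, i.e.\ invertible on $H^2$. Assuming $H_u$ compact, $H_u^*H_u$ is compact, so multiplying $T_{\bar u}T_u=I-H_u^*H_u$ on the right by $T_u^{-1}$ shows that $T_{\bar u}=T_u^{-1}-H_u^*H_uT_u^{-1}$ differs from $T_u^{-1}$ by a compact operator; substituting this into the second identity gives $H_{\bar u}^*H_{\bar u}=I-T_uT_{\bar u}=-T_u(T_{\bar u}-T_u^{-1})$, which is compact, and therefore $H_{\bar u}$ itself is compact. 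Feeding this back into Proposition~\ref{L5}, now applied to the Carleson sequence $\mathcal M$ and the inner function $B_\Lambda$, gives $\lim_n B_\Lambda(\mu_n)=0$.

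The main obstacle is exactly this conjugation-instability: the whole content of the theorem lies in converting the one-sided vanishing $\lim_n B_\mathcal M(\lambda_n)=0$ into its symmetric counterpart, and what makes the conversion possible is the invertibility of $T_u$, which I extract from the Riesz basis hypothesis. Everything else, the two reductions through Proposition~\ref{L5} and the symmetric Riesz basis criterion, is formal.
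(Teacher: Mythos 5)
Your proof is correct, and its skeleton matches the paper's: both pass from the Riesz basis hypothesis to the two distance inequalities via Theorem~C, both translate the vanishing conditions into compactness of $H_u$ and $H_{\bar u}$ via Proposition~\ref{L5}, and both return to Theorem~C for the final Riesz basis claim. The genuine difference is the middle step. The paper converts compactness of $H_u$ into $u=B_\mathcal M\overline{B_\Lambda}\in H^\infty+C$ (Hartman's theorem) and then invokes Lemma~F, cited as a black box, with the bound $\dist_{L^\infty}(\bar u,H^\infty+C)\le\dist_{L^\infty}(\bar u,H^\infty)<1$, to conclude $\bar u\in H^\infty+C$, i.e.\ that $H_{\bar u}$ is compact. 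You instead prove this implication from scratch: the Nehari bounds $\|H_u\|<1$, $\|H_{\bar u}\|<1$ make $T_u$ invertible (via $T_{\bar u}T_u=I-H_u^*H_u$, $T_uT_{\bar u}=I-H_{\bar u}^*H_{\bar u}$ and $T_{\bar u}=T_u^*$), whence $H_{\bar u}^*H_{\bar u}=I-T_uT_{\bar u}=T_uH_u^*H_uT_u^{-1}$ is compact and so is $H_{\bar u}$. This is a self-contained operator-theoretic substitute for Lemma~F, valid here because you have the stronger hypothesis $\dist_{L^\infty}(\bar u,H^\infty)<1$ rather than merely $\dist_{L^\infty}(\bar u,H^\infty+C)<1$; what it buys is independence from the $H^\infty+C$ machinery of Lemma~F, at the cost of a somewhat longer computation, and your remark that the implication fails without invertibility of $T_u$ correctly isolates where the Riesz basis hypothesis enters.

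One point of rigor you should patch: Theorem~C carries the standing hypothesis $\sup_{\lambda\in\Lambda}|\theta(\lambda)|<1$, which your ``formal symmetry'' paragraph ignores. In the forward application (Riesz basis $\Rightarrow$ distance conditions) you need $\sup_n|B_\mathcal M(\lambda_n)|<1$, and this comes exactly from the vanishing hypothesis $\lim_n B_\mathcal M(\lambda_n)=0$; so, contrary to your claim, the Riesz basis part does use that hypothesis. In the backward application, to the pair $(\mathcal M,B_\Lambda)$, you need $\sup_n|B_\Lambda(\mu_n)|<1$; since you settle the Riesz basis part before proving $\lim_n B_\Lambda(\mu_n)=0$, you must extract this from the distance inequality itself: if $h\in H^\infty$ with $\|B_\Lambda-B_\mathcal M h\|_\infty<1$, then, as $B_\mathcal M(\mu_n)=0$, the maximum principle gives $|B_\Lambda(\mu_n)|\le\|B_\Lambda-B_\mathcal M h\|_\infty<1$ --- the observation recorded after condition \eqref{distless1} in the introduction. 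With these two lines added your argument is complete; the paper sidesteps the issue by proving the vanishing statement first and only then applying Theorem~C.
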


Combining Lemma~H below 
 with Theorem~\ref{Th6} and using the fact that if ${\mathcal  K}_{\Lambda, \theta}$ is uniformly minimal, then $\Lambda \in (C)$ (see \cite[Corollary 3.4]{5}), we deduce
%
%

\begin{corollary} 
\label{Cor7}
The following assertions are equivalent:
\newline\noindent {\rm (i)} $\Lambda \in (C)$, $\mathcal M \in (C)$, 
$\lim_{n \to \infty}B_\mathcal M(\lambda_n)=0$, $\lim_{n \to \infty}B_\Lambda(\mu_n)=0$,
$\widetilde {\mathcal  K}_{\Lambda, B_\mathcal M}$ is a Riesz basis in $ K_{B_\mathcal M}$, $\widetilde{\mathcal K}_{\mathcal M, B_\Lambda}$ is a Riesz basis in $K_{B_\Lambda}$. 
\newline\noindent {\rm (ii)} $\Lambda \in (C)$, $\mathcal M \in (C)$, 
$\lim_{n \to \infty}B_\mathcal M(\lambda_n)=0$,
$\widetilde {\mathcal K}_{\Lambda, B_\mathcal M}$ is a Riesz basis in $K_{B_\mathcal M}$.
\newline\noindent {\rm (iii)} $\Lambda \in (C)$, $\mathcal M \in (C)$, 
$\lim_{n \to \infty}B_\mathcal M(\lambda_n)= 0$, 
$\mathcal K_{\Lambda, B_\mathcal M}$ is complete and minimal in  $K_{B_\mathcal M}$.
\newline\noindent {\rm (iv)} $\Lambda \in (B)$, $\mathcal M \in (C)$, 
$\lim_{n \to \infty}B_\mathcal M(\lambda_n)= 0$, 
$\mathcal K_{\Lambda, B_\mathcal M}$ is complete and uniformly minimal in  $K_{B_\mathcal M}$.
\end{corollary}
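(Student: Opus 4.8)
The plan is to prove the equivalences as a short cycle, routing everything through the symmetric statement of Theorem~\ref{Th6}, the Riesz-basis criterion of Lemma~H, the trivial inclusion $(C)\subset(B)$, and the fact (\cite[Corollary 3.4]{5}) that uniform minimality of $\mathcal K_{\Lambda,\theta}$ forces $\Lambda\in(C)$. Throughout I would keep in mind that completeness, minimality, and uniform minimality are invariant under normalization, so that one may pass freely between $\mathcal K_{\Lambda,B_{\mathcal M}}$ and $\widetilde{\mathcal K}_{\Lambda,B_{\mathcal M}}$, and that a normalized reproducing-kernel family which is a Riesz basis is in particular uniformly minimal.

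First I would dispatch (i)$\Leftrightarrow$(ii). The implication (i)$\Rightarrow$(ii) is immediate, since the hypotheses of (ii) form a subcollection of those of (i). For (ii)$\Rightarrow$(i), the two additional conclusions of (i), namely $\lim_n B_\Lambda(\mu_n)=0$ and that $\widetilde{\mathcal K}_{\mathcal M,B_\Lambda}$ is a Riesz basis in $K_{B_\Lambda}$, are exactly the output of Theorem~\ref{Th6} applied under the hypotheses of (ii) (which coincide verbatim with the assumptions of that theorem). Thus (i) and (ii) are equivalent with no extra computation.

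Next I would establish (ii)$\Leftrightarrow$(iii). The only difference between the two is that (ii) asserts that $\widetilde{\mathcal K}_{\Lambda,B_{\mathcal M}}$ is a Riesz basis in $K_{B_{\mathcal M}}$, while (iii) asserts that $\mathcal K_{\Lambda,B_{\mathcal M}}$ is complete and minimal there; all the remaining hypotheses ($\Lambda,\mathcal M\in(C)$ and $\lim_n B_{\mathcal M}(\lambda_n)=0$) are identical. Since the vanishing condition $\lim_n B_{\mathcal M}(\lambda_n)=0$ is in force, I would invoke Lemma~H with $\theta=B_{\mathcal M}$ to identify the Riesz-basis property with completeness together with (uniform) minimality. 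This is the step I expect to carry the genuine content: for an arbitrary inner function, completeness and minimality are strictly weaker than the Riesz-basis property, and it is precisely the vanishing condition (through the compactness furnished by Proposition~\ref{L5}) that collapses this gap. The main obstacle is therefore purely organizational, namely verifying that the hypotheses of Lemma~H are in place so that it applies to $B_{\mathcal M}$ exactly as stated.

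Finally I would close the loop with (iii)$\Leftrightarrow$(iv). For (iii)$\Rightarrow$(iv): by the equivalence (ii)$\Leftrightarrow$(iii) just proved, $\widetilde{\mathcal K}_{\Lambda,B_{\mathcal M}}$ is a Riesz basis, hence uniformly minimal, and by normalization-invariance $\mathcal K_{\Lambda,B_{\mathcal M}}$ is complete and uniformly minimal; since $\Lambda\in(C)\subset(B)$, all the requirements of (iv) hold. For (iv)$\Rightarrow$(iii): uniform minimality of $\mathcal K_{\Lambda,B_{\mathcal M}}$ trivially yields minimality, and by \cite[Corollary 3.4]{5} it forces $\Lambda\in(C)$; the hypotheses on $\mathcal M$, the vanishing condition, and completeness are unchanged, so (iii) follows. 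The only subtle point, that under the vanishing condition ``complete and minimal'' already entails ``uniformly minimal'' (indeed ``Riesz basis''), is handled entirely by Lemma~H, and the rest is bookkeeping.
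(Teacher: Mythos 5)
Your proposal is correct and takes essentially the same route as the paper, which deduces the corollary by combining exactly your three ingredients: Theorem~\ref{Th6} (your (i)$\Leftrightarrow$(ii)), Lemma~H applied with $\theta=B_{\mathcal M}$ (your (ii)$\Leftrightarrow$(iii), using completeness to upgrade the Riesz sequence of Lemma~H to a Riesz basis), and \cite[Corollary 3.4]{5} together with $(C)\subset(B)$ (your (iii)$\Leftrightarrow$(iv)). The paper leaves the bookkeeping implicit; your cycle spells it out faithfully.
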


By Proposition~\ref{Pr1}, if one of the assertions of the corollary is fulfilled, then $\sum_{n \ge 0} |\theta_n(\mu)|^2=\infty$, $\mu \in \D \setminus \Lambda$.

Sometimes it is convenient to work with projections of Malmquist--Walsh functions $\{l_n\}_{n \ge 0}$, $ l_n=B_{0,n-1}\widetilde k(\lambda_n, \cdot)$, $ n \ge 0$,
where $B_{0,n}=\prod_{i=0}^n b_{\lambda _i}$, $b_\lambda=(-\lambda/|\lambda|)\tau_\lambda$ is the Blaschke factor corresponding to $\lambda$,
  instead of projections of reproducing kernels. Proposition~\ref{L5} and Lemma~G 
   below show that if $\lim_{n\to\infty} \theta(\lambda_n)=0$ then the Gram matrix of the family $\{P_\te l_n\}_{n \ge 0}$
  is of the form $I + K$ with $K$ compact. 
  
  Families whose Gram matrices have such a form are asymptotically orthonormal families (see the definition in Section 2 below) and are widely studied 
  (see, for example, \cite{3,6,9,13,14,15}).
%
%
%
%
  

In a general situation it may be complicated to estimate the norms $\| P_\theta l_n\|_2$. However, if $\lim_{n\to\infty} \theta(\lambda_n) = 0$, we can do this.


\begin{corollary}
\label{Cor8}
Let $\Lambda \in (C)$. Then the following assertions are equivalent
\newline\noindent {\rm (i)} $\lim_{n\to\infty} \theta(\lambda_n)=0$. 
\newline\noindent {\rm (ii)}  The Gram matrix of the family $\{P_\te l_n\}_{n \ge 0}$
  is of the form $I + K$ with $K$ compact. 

If these assertions are fulfilled, we have 
$$
\lim_{n \to \infty }\dist_{H^2} ^2(P_\theta l_{n}, \spanmat \mathcal  K_{\Lambda_{n-1}, \theta})  
= 1
$$
and thus
$\lim_{n\to\infty} \| P_\theta l_n\|_2 =1$.
\end{corollary}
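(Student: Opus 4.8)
The plan is to read off the equivalence (i)$\Leftrightarrow$(ii) from Lemma~G and Proposition~\ref{L5}, and then to extract the two limits from the structure of the Gram matrix. First I would record that, since $P_\theta$ is an orthogonal projection and $l_i\in K_{B_\Lambda}$,
\[
\langle P_\theta l_j, P_\theta l_i\rangle=\langle P_\theta l_j, l_i\rangle=\delta_{ij}-\langle (I-P_\theta) l_j, l_i\rangle ,
\]
where $(I-P_\theta)l_j=\theta P_+(\overline\theta\, l_j)$ is the projection of $l_j$ onto $\theta H^2$. Thus the Gram matrix of $\{P_\theta l_n\}$ is $I-\Phi$, with $\Phi$ the matrix, in the orthonormal basis $\{l_n\}$ of $K_{B_\Lambda}$, of the operator $M:=P_{B_\Lambda}(I-P_\theta)\big|_{K_{B_\Lambda}}$ (here $P_{B_\Lambda}$ is the orthogonal projection onto $K_{B_\Lambda}$). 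Lemma~G identifies the compactness of $M$ with that of $\theta(T_{B_\Lambda})\big|K_{B_\Lambda}$, and Proposition~\ref{L5} identifies the latter with \eqref{limzero}. Hence the Gram matrix is $I+K$ with $K$ compact exactly when $\lim_n\theta(\lambda_n)=0$, giving (i)$\Leftrightarrow$(ii).

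For the limits, set $v_n:=P_\theta l_n$ and note that under (i)/(ii) the operator $M$ is compact, self-adjoint, and $0\le M\le I$ because $\langle Mu,u\rangle=\|(I-P_\theta)u\|_2^2$ for $u\in K_{B_\Lambda}$. Writing $R_n$ for the orthogonal projection onto $K_{B_{\Lambda_{n-1}}}=\spanmat\{l_0,\dots,l_{n-1}\}$ and $W_{n-1}:=\spanmat\mathcal K_{\Lambda_{n-1},\theta}=\spanmat\{v_0,\dots,v_{n-1}\}$, I would use
\[
\dist_{H^2}^2(v_n,W_{n-1})=\|v_n\|_2^2-\|P_{W_{n-1}}v_n\|_2^2,\qquad \|v_n\|_2^2=1-\langle M l_n,l_n\rangle .
\]
Since $l_n\rightharpoonup 0$ and $M$ is compact, $M l_n\to 0$ in norm, so $\|v_n\|_2^2\to1$, and the distance limit reduces to $\|P_{W_{n-1}}v_n\|_2\to0$. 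The off--diagonal Gram entries are $\langle v_n,v_i\rangle=-\langle M l_n,l_i\rangle$ for $i<n$, i.e. the vector of cross terms is $c_n=-R_n M l_n$ with $\|c_n\|_2\le\|M l_n\|_2\to0$.

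In the nondegenerate case, where in addition \eqref{distless1} holds, this closes immediately: \eqref{distless1} is equivalent to $\|M\|<1$, equivalently to $K_{B_\Lambda}\cap\theta H^2=\{0\}$, equivalently to $\{v_n\}$ being a Riesz sequence. Then $I-M\ge(1-\|M\|)I$, the finite sections of the Gram matrix are uniformly invertible, and $\|P_{W_{n-1}}v_n\|_2^2=c_n^{*}(\mathrm{Gram}_{n-1})^{-1}c_n\le(1-\|M\|)^{-1}\|c_n\|_2^2\to0$. The main obstacle is that \eqref{distless1} may genuinely fail under \eqref{limzero}: for instance with $\theta=B_{\mathcal M}$ when the dual family $\mathcal K_{\mathcal M,B_\Lambda}$ is not complete, so that $E:=K_{B_\Lambda}\cap\theta H^2$ is a nonzero (finite-dimensional) eigenspace of $M$ for the eigenvalue $1$ and the smallest eigenvalue of the Gram sections degenerates to $0$. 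To handle this I would split $M=P_E+M''$ with $\|M''\|<1$, observe that the tail $\{v_n\}_{n\ge N}$ is a Riesz sequence because $\|P_{[N,\infty)}MP_{[N,\infty)}\|\to0$ (so only a fixed finite-dimensional ``resonant'' part is uncontrolled), and that $\|P_E l_n\|_2\to0$ since $E$ is finite-dimensional and $l_n\rightharpoonup0$; feeding these into the decomposition $W_{n-1}=\spanmat\{v_0,\dots,v_{N-1}\}+\spanmat\{v_N,\dots,v_{n-1}\}$ gives $\|P_{W_{n-1}}v_n\|_2\to0$. The delicate point is precisely the rate balance in this degenerate regime: the vanishing of $\|c_n\|_2$ and of the resonant coefficients $\|P_E l_n\|_2$ must outpace the degeneration of the smallest Gram eigenvalue. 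An alternative route avoiding this is to specialise \eqref{distlmu} to the finite set $\Lambda_{n-1}$ at $\mu=\lambda_n$ (so that $\theta_n(\lambda_n)=\gamma_n$), which yields
\[
\dist_{H^2}^2(P_\theta l_n,W_{n-1})=(1-|\theta(\lambda_n)|^2)\prod_{j=1}^{n}\frac{1-|\theta_j(\lambda_n)|^2}{1-|\theta_j(\lambda_n)|^2\,|\tau_{\lambda_{j-1}}(\lambda_n)|^2},
\]
reducing everything to the estimate $\sum_{j=1}^{n}|\theta_j(\lambda_n)|^2\bigl(1-|\tau_{\lambda_{j-1}}(\lambda_n)|^2\bigr)\to0$, to be read off from \eqref{limzero} through the Schur--Nevanlinna recursion and the Carleson separation of $\Lambda$. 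Finally, once $\dist\to1$ is established, $\lim_n\|P_\theta l_n\|_2=1$ follows at once from $\dist_{H^2}(v_n,W_{n-1})\le\|v_n\|_2\le\|l_n\|_2=1$.
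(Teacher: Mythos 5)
Your proof of (i)$\Leftrightarrow$(ii) is correct and is the same as the paper's: Lemma~G plus Proposition~\ref{L5}, together with the fact that $AA^*$ is compact if and only if $A$ is; your operator $M=P_{B_\Lambda}(I-P_\theta)\vert K_{B_\Lambda}$ is exactly $\theta(T_{B_\Lambda})\theta(T_{B_\Lambda})^*$, so this part merely rephrases Lemma~G. Your closing observation, that the norm limit follows from $\dist_{H^2}(v_n,W_{n-1})\le\|v_n\|_2\le 1$ once the distance limit is known, also matches the paper.

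The real issue is the distance limit, and here you have put your finger on something the paper glosses over. The paper's entire argument is ``By Lemma~\ref{le9}''; but Lemma~\ref{le9} is stated, and proved (item (iv) goes through the biorthogonal family), for $U+\mathfrak{S}_\infty$ families, i.e.\ for Riesz sequences. Since the Gram operator equals $I-M$ with $M$ compact and positive, the Fredholm alternative shows that $\{P_\theta l_n\}_{n\ge0}$ is a Riesz sequence precisely when $K_{B_\Lambda}\cap\theta H^2=\{0\}$: this is your nondegenerate case, where your finite-section estimate $\|P_{W_{n-1}}v_n\|_2^2\le(1-\|M\|)^{-1}\|c_n\|_2^2\to0$ is a complete proof, of exactly the same scope as the paper's citation of Lemma~\ref{le9}. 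In the degenerate case $K_{B_\Lambda}\cap\theta H^2\neq\{0\}$, which can occur under hypothesis (i) --- the paper says so itself in Remark~1, just after this corollary: $\{P_\theta l_n\}_{n\ge0}$ need not be minimal --- Lemma~\ref{le9} simply does not apply, so the paper's proof establishes nothing there either; and your own treatment of that case is, as you candidly admit, not a proof: neither the splitting $M=P_E+M''$ nor the product-formula route through \eqref{distMWlambdan} is carried to the end. Note that the difficulty is genuine and cannot be removed at the level of generality at which both you and the paper argue: if $\{e_k\}_{k\ge0}$ is orthonormal, $x_n=e_n$ for $n\ge1$ and $x_0=\sqrt{3}\,\sum_{k\ge1}2^{-k}e_k$, then the Gram matrix of $\{x_n\}_{n\ge0}$ is $I+K$ with $K$ of rank two, yet $\dist(x_n,\spanmat\{x_0,\dots,x_{n-1}\})\le 1/2$ for every $n\ge1$; so condition (ii) alone never implies the distance limit, and any complete argument in the degenerate regime must exploit the specific structure of the vectors $P_\theta l_n$, for instance by proving the summation estimate you state after \eqref{distMWlambdan} but do not prove. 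In short: you have reproduced the paper's proof exactly where that proof is valid, and the gap you flag in the degenerate case is a gap in the paper's own one-line argument as well.
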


%
 
\section{Necessary facts and definitions}
\label{sect2}
\subsection{Carleson's Interpolation theorem}
A sequence $\{z_n\}_{n \ge 1} \subset \D$ is called an interpolating sequence if
every interpolation problem
$$
f(z_n) = a_n, \qquad  n \ge 1,
$$
with bounded data $\{a_n\}_{n \ge 1}$  has a solution $f \in H^\infty$. 

A famous theorem of L.Carleson (\cite{12}) states that  a sequence
is interpolating if and only if it satisfies the Carleson condition $(C)$.

\subsection{Families of reproducing kernels and their projections}
 A family of vectors $\{x_n\}_{n \ge 1}$ in a Hilbert space $H$ is minimal if each element of the family lies outside the
closed linear hull of the others, and is uniformly minimal if $\inf_{n \ge 1} \dist_H (\frac {x_n}{\|x_n\|_H}, \spanmat \{x_k\}_{ k \neq n})>0$ .

Following, for example,  \cite[Chapter VI, Section 1, p.132] {1}, we say that
a family of  vectors $X=\{x_n\}_{n\ge 1}$ in a Hilbert space $H$ is called a Riesz sequence 
 if there exists a bounded on $\spanmat X$ and invertible  linear operator $V$ mapping  $X$ into an orthonormal family $\{Vx_n\}_{n\ge 1}$. Such an operator $V$ is called an orthogonalizer of $X$.

A  Riesz sequence complete in $H$  is called a Riesz basis. 

%

%


A family of vectors $\{x_n\}_{n\ge 1}$ in $H$ is called an asymptotically orthonormal  sequence  (AOS)
if for every $ N \ge 1$ there exist positive numbers $c_N$, $C_N$  
such that for every  $\{a_n\}_{n=1}^\infty \in l^2$ we have
$$
\label{aob}
c_N \sum_{n \ge N}|a_n|^2 \le \| \sum_{n \ge N }a_nx_n\|_H^2 \le C_N \sum_{n \ge N}|a_n|^2,
$$
where 
$$
\label{aobc}
\lim_{N \to \infty} c_N=\lim_{N \to \infty} C_N=1.
$$
A complete asymptotically orthonormal sequence is called an asymptotically orthonormal basis.


A sequence $\Lambda=\{\lambda_n\} $ of points of the open unit disc is thin if 
$$
 \lim_{n\to\infty} \prod_{m \neq n}\Bigl| \frac {\lambda_m-\lambda_n}{1-\overline \lambda_m \lambda_n}\Bigr|=1. 
$$

\begin{theoremAOS}
Let $\Lambda=\{\lambda_k\}_{k \ge 1}$, $\Lambda_{n, \infty}=\{\lambda_n\}_{k \ge n}$.
\begin{itemize}
\item[(1)] The sequence $\widetilde  {\mathcal K}_{\Lambda}$ is an AOS if and only if $\Lambda$ is thin (\cite {8,14}).
\item[(2)] If the sequence $\widetilde  {\mathcal K}_{\Lambda, \theta}$ is an AOS then $\Lambda$ is thin (\cite[Proposition 5.1] {3}).
\item[(3)]  \cite[Theorem 5.2] {3} Let $\sup_{\lambda \in \Lambda} |\te(\lambda)|<1$ and let the sequence $\widetilde  {\mathcal K}_{\Lambda}$ be an AOS. Then either
\begin{itemize}
\item[a)] $\widetilde  {\mathcal K}_{\Lambda, \theta}$ is an AOS

or

\item[b)] for some $p \ge 2$ the family $\widetilde  {\mathcal K}_{\Lambda_{p, \infty}, \theta}$ is an asymptotically orthonormal basis in $K_\theta$.
\end{itemize}
\end{itemize}
\end{theoremAOS}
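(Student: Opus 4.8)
The plan is to reduce all three parts to the behaviour of the tail Gram matrices. Write $G_N=\bigl(\langle \widetilde k(\lambda_m,\cdot),\widetilde k(\lambda_n,\cdot)\rangle\bigr)_{m,n\ge N}$ and $G_N^\theta=\bigl(\langle \widetilde k_\theta(\lambda_m,\cdot),\widetilde k_\theta(\lambda_n,\cdot)\rangle\bigr)_{m,n\ge N}$. Since the extreme points of the spectrum of a tail Gram operator are exactly the optimal constants in the defining inequalities, the AOS property is equivalent to the self-adjoint operators $G_N$ (resp. $G_N^\theta$) being bounded and invertible with $\|G_N-I\|\to 0$ (resp. $\|G_N^\theta-I\|\to 0$) as $N\to\infty$. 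The diagonal entries are $1$ by normalization, and a direct computation gives the off-diagonal magnitudes
$$|\langle \widetilde k(\lambda_m,\cdot),\widetilde k(\lambda_n,\cdot)\rangle|^2 = 1-|b_{\lambda_m}(\lambda_n)|^2$$
and
$$|\langle \widetilde k_\theta(\lambda_m,\cdot),\widetilde k_\theta(\lambda_n,\cdot)\rangle| = \frac{|1-\overline{\theta(\lambda_n)}\theta(\lambda_m)|}{\sqrt{(1-|\theta(\lambda_m)|^2)(1-|\theta(\lambda_n)|^2)}}\,|\langle \widetilde k(\lambda_m,\cdot),\widetilde k(\lambda_n,\cdot)\rangle|.$$
The elementary identity $|1-\overline a b|^2=(1-|a|^2)(1-|b|^2)+|a-b|^2$ (with $a=\theta(\lambda_m)$, $b=\theta(\lambda_n)$) shows that the scalar prefactor is $\ge 1$, so the model-space off-diagonals dominate the Hardy-space ones entrywise; and when $\sup_\Lambda|\theta|<1$ the same identity bounds the prefactor above by $\frac{1+s^2}{1-s^2}$ with $s=\sup_\Lambda|\theta|$, so the two families of off-diagonals are comparable. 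These two observations drive parts (2) and (3).

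For part (1) I would first record that, because $1-|b_{\lambda_m}(\lambda_n)|\le 1-|b_{\lambda_m}(\lambda_n)|^2\le 2(1-|b_{\lambda_m}(\lambda_n)|)$ and $-\log x\sim 1-x$ near $1$, thinness of $\Lambda$ is equivalent to $\sum_{m\neq n}(1-|b_{\lambda_m}(\lambda_n)|^2)\to 0$ as $n\to\infty$, i.e.\ to the row $\ell^2$-norms of $G_N-I$ tending to $0$. The direction AOS $\Rightarrow$ thin is then immediate: the $\ell^2$-norm of the $n$-th column of $G_N-I$ is at most $\|G_N-I\|$, so AOS forces $\sup_{n\ge N}\sum_{m\ge N,\,m\neq n}(1-|b_{\lambda_m}(\lambda_n)|^2)\to 0$; adding the finitely many head terms $m<N$, which vanish as $n\to\infty$ since $\Lambda\in(B)$ gives $|\lambda_n|\to 1$, yields $\limsup_n\sum_{m\neq n}(1-|b_{\lambda_m}(\lambda_n)|^2)\le\|G_N-I\|^2$, hence $0$. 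The reverse implication thin $\Rightarrow$ AOS is the technical core: row $\ell^2$-smallness does not by itself control the operator norm, and the cleanest route is the characterization of thin sequences as those interpolating sequences whose tails $\Lambda_{N,\infty}$ have interpolation constant tending to $1$, together with the passage from interpolation constant near $1$ to $\|G_N-I\|\to 0$; this is the content I would invoke from \cite{8,14}.

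Part (2) is then short. If $\widetilde{\mathcal K}_{\Lambda,\theta}$ is an AOS, then $\|G_N^\theta-I\|\to 0$, so the $\ell^2$-norm of each column of $G_N^\theta-I$ tends to $0$ uniformly for $n\ge N$. By the entrywise domination $|\langle\widetilde k(\lambda_m,\cdot),\widetilde k(\lambda_n,\cdot)\rangle|\le|\langle\widetilde k_\theta(\lambda_m,\cdot),\widetilde k_\theta(\lambda_n,\cdot)\rangle|$ recorded above, the columns of $G_N-I$ are at least as small, giving $\sup_{n\ge N}\sum_{m\ge N,\,m\neq n}(1-|b_{\lambda_m}(\lambda_n)|^2)\to 0$. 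Since the hypotheses force $\Lambda\in(C)$ (uniform minimality, by \cite[Corollary 3.4]{5}) hence $\Lambda\in(B)$ and $|\lambda_n|\to 1$, the finitely many head terms again vanish as $n\to\infty$, and one concludes $\sum_{m\neq n}(1-|b_{\lambda_m}(\lambda_n)|^2)\to 0$, i.e.\ $\Lambda$ is thin, exactly as in the easy direction of part (1).

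Part (3) is the substantial one, and I would follow \cite[Theorem 5.2]{3}. Granting $\sup_\Lambda|\theta|<1$ and that $\widetilde{\mathcal K}_\Lambda$ is an AOS (so $\Lambda$ is thin by part (1)), the comparability of off-diagonals keeps $G_N^\theta$ uniformly bounded on tails, so the whole question is the lower spectral edge and the distance $\|G_N^\theta-I\|$. If this tends to $0$ we are in case (a). If not, the bounded-but-nonvanishing prefactors $\frac{|1-\overline{\theta(\lambda_n)}\theta(\lambda_m)|}{\sqrt{(1-|\theta(\lambda_m)|^2)(1-|\theta(\lambda_n)|^2)}}$ prevent the perturbation from dying, and I would show this obstruction is asymptotically rigid: after deleting finitely many indices, the projected kernels $\{\widetilde k_\theta(\lambda_n,\cdot)\}_{n\ge p}$ form not merely an AOS but a \emph{complete} one, i.e.\ an asymptotically orthonormal basis of $K_\theta$. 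Completeness is where the real work lies; I would establish it through the completeness criterion of Lemma~B (non-completeness of $\spanmat\widetilde{\mathcal K}_{\Lambda,\theta}$ means $k_\theta(\mu,\cdot)\notin\spanmat\widetilde{\mathcal K}_{\Lambda,\theta}$ for $\mu\in\D\setminus\Lambda$) together with an analysis of how the projected tail fills out $K_\theta$ relative to $K_{B_\Lambda}$. The main obstacle throughout the theorem is precisely this completeness/defect analysis in case (b): controlling the saturation of $K_\theta$ by the projected tail, which does not reduce to the Gram-matrix estimates used in parts (1) and (2).
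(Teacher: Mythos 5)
This statement is a quoted background theorem: the paper gives no proof of it at all, only citations (\cite{8,14} for (1), \cite[Proposition 5.1]{3} for (2), \cite[Theorem 5.2]{3} for (3)), so there is no in-paper argument to compare against; your proposal has to stand or fall on its own. The parts you actually execute are correct. The Gram-entry computations, the identity $|1-\overline a b|^2=(1-|a|^2)(1-|b|^2)+|a-b|^2$ giving the prefactor bounds $1\le \frac{|1-\overline{\theta(\lambda_n)}\theta(\lambda_m)|}{\sqrt{(1-|\theta(\lambda_m)|^2)(1-|\theta(\lambda_n)|^2)}}\le\frac{1+s^2}{1-s^2}$, the equivalence of AOS with $\|G_N-I\|\to 0$ for the tail Gram operators, and the bookkeeping (column norm bounded by operator norm, head terms $1-|b_{\lambda_m}(\lambda_n)|^2\to 0$ for fixed $m$ as $|\lambda_n|\to 1$) are all sound. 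This gives you genuine, self-contained proofs of the implication AOS $\Rightarrow$ thin in part (1) and of all of part (2) via the entrywise domination of the Hardy off-diagonals by the model-space off-diagonals — the same mechanism as in \cite{3}.

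However, as a proof of the full theorem the proposal has two genuine gaps, both of which you acknowledge but neither of which you close. First, the implication thin $\Rightarrow$ AOS in part (1): you correctly observe that row/column $\ell^2$-smallness of $G_N-I$ does not control the operator norm, and then you simply defer the passage (thin tails $\Rightarrow$ interpolation constants of $\Lambda_{N,\infty}$ tending to $1$ $\Rightarrow$ $\|G_N-I\|\to 0$) to \cite{8,14}; that passage is exactly the nontrivial content of the direction, so nothing is proved. Second, part (3) is only a plan: nothing in your sketch shows why failure of the AOS property, under thinness and $\sup_{\lambda\in\Lambda}|\theta(\lambda)|<1$, forces the rigid alternative that some tail $\widetilde{\mathcal K}_{\Lambda_{p,\infty},\theta}$ is \emph{complete} in $K_\theta$ (and not merely an AOS of its span). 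The sentence ``I would show this obstruction is asymptotically rigid'' is a placeholder where the whole argument of \cite[Theorem 5.2]{3} lives; Lemma~B only converts non-completeness into $k_\theta(\mu,\cdot)\notin\spanmat\mathcal K_{\Lambda,\theta}$ for a single $\mu$, and you give no mechanism (e.g., a criterion of the type $\dist_{L^\infty}(\theta,b_\mu B_\Lambda H^\infty)<1$ versus $=1$, as in Proposition~E of the paper) to decide which horn of the dichotomy occurs. So the proposal correctly settles the easy halves and identifies, but does not supply, the two hard steps.
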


We say that an operator $V$ acting between two Hilbert spaces is a $U+\mathfrak{S}_\infty$ operator ($V \in (U+\mathfrak{S}_\infty)$) if $V=U+K$ with $U$ unitary, $K$ compact.

We say that a matrix $A$ is an $I+\mathfrak{S}_\infty$ matrix ($A \in (I+\mathfrak{S}_\infty)$), if it defines a bounded invertible operator on $l^2$ having
the form $I+K$ where $K$ is a compact and $I$ is the identity operator.

Following \cite{8}, we say that a  Riesz sequence  $X$ is a $U+\mathfrak{S}_\infty$ family ($X \sim U+\mathfrak{S}_\infty $),
if  one (and thus, every) orthogonalizer of $X$ is a $U+\mathfrak{S}_\infty$ operator. 

 \begin{theoremUK} (see \cite{15, 9})
Let  $X$
be a family of vectors $\{x_n\}_{n\ge 1}$ in a Hilbert space.
We denote by $G(X)$ the Gram matrix of $X$.

The following assertions are equivalent :
\begin{itemize}
\item[1.] $X$ is an AOS.
\item[2.] $X \sim U+\mathfrak{S}_\infty$.
\item[3.]$G(X)\in (I+\mathfrak{S}_\infty)$.
\end{itemize}
%
\end{theoremUK}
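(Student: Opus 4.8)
The plan is to use condition~3, the concrete Gram-matrix condition, as a hub and to establish the two equivalences $1\Leftrightarrow3$ and $2\Leftrightarrow3$. Throughout, let $T\colon\ell^2\to H$ be the synthesis operator $Te_n=x_n$, where $\{e_n\}$ is the standard orthonormal basis of $\ell^2$, and let $P_N$ be the orthogonal projection of $\ell^2$ onto $\spanmat\{e_n\}_{n\ge N}$. A direct computation gives $G(X)=T^*T$, so each of the three conditions forces $X$ to be a Riesz sequence; then $T$ is bounded and bounded below, $G:=G(X)$ is a positive boundedly invertible operator with $\sigma(G)\subset[\varepsilon,\|G\|]$ for some $\varepsilon>0$, and $T=WG^{1/2}$ is its polar decomposition, where $W$ is a unitary of $\ell^2$ onto $H_X:=\spanmat X$.

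The analytic core of $1\Leftrightarrow3$ is a compactness criterion: for bounded $K$ on $\ell^2$, $K$ is compact if and only if $\|P_NKP_N\|\to0$. For the forward direction I would use that $P_N=P_N^*\to0$ in the strong operator topology together with the standard fact that pre- and postcomposing a compact operator with a strongly null bounded net gives norm convergence to $0$, obtaining $\|P_NK\|\to0$ and hence $\|P_NKP_N\|\to0$; conversely $K-P_NKP_N$ is finite rank, so $\|P_NKP_N\|\to0$ realizes $K$ as a norm limit of finite-rank operators. To bring in condition~1, I would write $\|\sum_{n\ge N}a_nx_n\|^2=\langle GP_Na,P_Na\rangle$, so that the AOS inequalities say precisely that the tail compressions of $G$ on $\spanmat\{e_n\}_{n\ge N}$ lie between $c_N$ and $C_N$; the requirement $c_N,C_N\to1$ is then equivalent to $\|P_N(G-I)P_N\|\to0$. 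Applying the criterion with $K=G-I$ gives $G\in(I+\mathfrak{S}_\infty)$, and the computation reverses.

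For $2\Leftrightarrow3$ I would go through functional calculus. Writing $\psi(t)=\dfrac{t^{-1/2}-1}{t-1}$, which is continuous on $[\varepsilon,\|G\|]\supset\sigma(G)$, we get $G^{-1/2}-I=(G-I)\psi(G)$, a product of the compact operator $G-I$ and a bounded operator, hence compact; thus $G\in(I+\mathfrak{S}_\infty)$ forces $G^{-1/2}=I+(\text{compact})$. The canonical orthogonalizer $V=T^{-1}=G^{-1/2}W^*$ then equals $W^*+(\text{compact})$ with $W^*$ unitary, i.e. $X\sim U+\mathfrak{S}_\infty$. Conversely, if some orthogonalizer has the form $V=U+K$ with $U$ unitary and $K$ compact, I would write $V=U(I+U^*K)$ and invert, using that the inverse of $I+(\text{compact})$ is again $I+(\text{compact})$, to get $T=V^{-1}=(I+(\text{compact}))U^*$ and therefore $G=T^*T=U(I+(\text{compact}))U^*=I+(\text{compact})$, which is condition~3. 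Independence of condition~2 from the chosen orthogonalizer is part of its definition and survives here because any two orthogonalizers differ by composition with a unitary, under which $U+\mathfrak{S}_\infty$ is stable.

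The step I expect to be most delicate is the faithful translation of the AOS inequalities into the single norm statement $\|P_N(G-I)P_N\|\to0$: one must verify that the optimal constants $c_N=\inf\sigma\bigl(P_NGP_N|_{\spanmat\{e_n\}_{n\ge N}}\bigr)$ and $C_N=\sup\sigma\bigl(P_NGP_N|_{\spanmat\{e_n\}_{n\ge N}}\bigr)$ tend to $1$ exactly when the compressions converge, and that the strict positivity $c_N>0$ demanded for every (in particular every small) $N$ is furnished uniformly by the invertibility of $G$ rather than merely in the limit.
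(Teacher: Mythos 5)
Your proof is correct. Note that the paper itself gives no proof of this statement: it is quoted as a known result with references to Fricain and Fricain--Rupam, so there is nothing internal to compare against; your argument is essentially the standard one from that literature (Gram-matrix compressions for $1\Leftrightarrow 3$, polar decomposition plus functional calculus for $2\Leftrightarrow 3$). Two small points you should tighten in a write-up. First, $\psi(t)=\frac{t^{-1/2}-1}{t-1}$ has a removable singularity at $t=1$; you must define $\psi(1)=-\tfrac12$ to have a genuinely continuous function on $[\varepsilon,\|G\|]$ before applying the continuous functional calculus. Second, in the step $2\Rightarrow 3$ the identity $T=V^{-1}$ is an abuse: $V^{-1}$ maps the orthonormal family $\{Vx_n\}$ back to $\{x_n\}$, while $T$ acts on the standard basis of $\ell^2$, so one should write $T=V^{-1}\Phi$ where $\Phi e_n=Vx_n$ is the unitary identification of $\ell^2$ with $\spanmat\{Vx_n\}$; then $G=T^*T=\Phi^*(VV^*)^{-1}\Phi$, and $VV^*=I+(\text{compact})$ together with stability of $I+\mathfrak{S}_\infty$ under inversion and unitary conjugation gives condition~3. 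Your handling of the delicate points you flagged yourself --- recovering the optimal AOS constants as spectral bounds of the compressions, and getting uniform positivity of $c_N$ from invertibility of $G$ rather than only asymptotically --- is exactly right, as is the observation that membership in $U+\mathfrak{S}_\infty$ is independent of the chosen orthogonalizer because any two differ by a unitary factor.
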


 %
%
%
%
%
%
%

Next we need several results which can be found, for example, in \cite{1,11}. In what follows $ \Lambda \subset \D $, $\Lambda \in (B)$, $B=B_\Lambda$, $\theta$ is an inner function.

\begin{theoremletterA}  (see \cite[Lecture VI, pp. 132--135]{1})
\label{ThA}
\begin{itemize}
\item [(1)] The following assertions are equivalent :
\begin{itemize}
\item [(a)] $\widetilde {\mathcal K}_\Lambda$ is a Riesz basis in $K_B$.
\item [(b)] ${\mathcal K}_\Lambda$ is a uniformly minimal family.
\item[(c)] $\Lambda \in (C)$. 
\end{itemize}
\item [(2)] Let $\mu \in \D \setminus \Lambda$. Then $\dist_{H^2}( \widetilde k(\mu, \cdot), K_B)=|B(\mu)|$.
\end{itemize}
\end{theoremletterA}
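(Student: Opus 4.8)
The plan is to establish part (2) first, since it is elementary and feeds directly into part (1). To compute $\dist_{H^2}(\widetilde k(\mu,\cdot), K_B)$ I would use the orthogonal decomposition $H^2 = K_B \oplus BH^2$, so that this distance equals the norm of the projection of $\widetilde k(\mu,\cdot)$ onto $(K_B)^\perp = BH^2$. The projection of $k(\mu,\cdot)$ onto $K_B$ is the reproducing kernel $k_B(\mu,z) = \frac{1 - \overline{B(\mu)}B(z)}{1-\overline\mu z}$, and subtracting gives
$$
k(\mu,\cdot) - k_B(\mu,\cdot) = \overline{B(\mu)}\, B\, k(\mu,\cdot) \in BH^2 .
$$
Since $B$ is inner, $\|B\,k(\mu,\cdot)\|_2 = \|k(\mu,\cdot)\|_2$, so the projection has norm $|B(\mu)|\,\|k(\mu,\cdot)\|_2$; dividing by $\|k(\mu,\cdot)\|_2$ yields $\dist_{H^2}(\widetilde k(\mu,\cdot), K_B) = |B(\mu)|$.

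For part (1) I would treat the implication $(a) \Rightarrow (b)$ as immediate from the definitions: an orthogonalizer $V$ witnessing that $\widetilde{\mathcal K}_\Lambda$ is a Riesz basis shows at once that $\inf_n \dist(\widetilde k(\lambda_n,\cdot), \spanmat\{\widetilde k(\lambda_m,\cdot): m\ne n\}) > 0$, i.e. uniform minimality. For $(b) \Leftrightarrow (c)$ I would compute the relevant distance explicitly. The closed span of $\{k(\lambda_m,\cdot): m\ne n\}$ equals $K_{B/b_{\lambda_n}}$, because $f$ is orthogonal to all these kernels exactly when $f$ vanishes on $\Lambda\setminus\{\lambda_n\}$, i.e. $f \in (B/b_{\lambda_n})H^2$. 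Applying part (2) with the Blaschke product $B/b_{\lambda_n}$ in place of $B$ then gives
$$
\dist_{H^2}\bigl(\widetilde k(\lambda_n,\cdot), \spanmat\{\widetilde k(\lambda_m,\cdot): m\ne n\}\bigr) = \prod_{m\ne n}|b_{\lambda_m}(\lambda_n)| .
$$
Taking the infimum over $n$ converts uniform minimality of $\mathcal K_\Lambda$ directly into the Carleson condition $(C)$.

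The genuine content lies in $(c) \Rightarrow (a)$, which is where I expect the main obstacle. The key observation is that $\widetilde{\mathcal K}_\Lambda$ is a Riesz basis of $K_B$ if and only if the moment (analysis) operator $f \mapsto \{\langle f, \widetilde k(\lambda_n,\cdot)\rangle\}_n = \{\sqrt{1-|\lambda_n|^2}\,f(\lambda_n)\}_n$ is an isomorphism of $K_B$ onto $l^2$, completeness being automatic since the kernels span $K_B$. I would obtain the upper bound (boundedness of this operator) from the Carleson embedding theorem applied to $f \in K_B \subset H^2$, and the lower bound together with surjectivity onto $l^2$ from Carleson's interpolation theorem of Section~2.1, which under $(C)$ produces, for each target $\{a_n\} \in l^2$, an interpolant solving $\sqrt{1-|\lambda_n|^2}\,f(\lambda_n) = a_n$ with controlled norm. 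The hard part is precisely this appeal to the interpolation theorem: it supplies the nontrivial lower estimate that no elementary Hilbert-space argument yields, and it is the only step requiring the full strength of the deep function theory.
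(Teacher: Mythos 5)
First, a point of reference: the paper does not prove Theorem~A at all --- it is quoted from \cite[Lecture VI]{1} --- so your argument can only be measured against the classical proof given there, which it essentially follows in outline. Your part (2) is correct and complete: the identity $k(\mu,\cdot)-k_B(\mu,\cdot)=\overline{B(\mu)}\,B\,k(\mu,\cdot)$ together with the isometry of multiplication by the inner function $B$ gives the distance formula. Likewise (a)$\Rightarrow$(b) via the orthogonalizer, and (b)$\Leftrightarrow$(c) via the identification $\spanmat\{k(\lambda_m,\cdot):m\ne n\}=K_{B/b_{\lambda_n}}$ followed by an application of part (2) to the Blaschke product $B/b_{\lambda_n}$, are correct and are exactly the standard route.

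The gap is in (c)$\Rightarrow$(a). Your reduction of the Riesz basis property to the assertion that the analysis map $Tf=\{\sqrt{1-|\lambda_n|^2}\,f(\lambda_n)\}_n$ is a bounded bijection of $K_B$ onto $l^2$ is fine; note that injectivity is trivial ($f\in K_B$ vanishing on $\Lambda$ lies in $K_B\cap BH^2=\{0\}$) and the bounded inverse then comes for free from the open mapping theorem. But the surjectivity step rests on a statement that is \emph{not} the theorem of Section~2.1: Carleson's theorem, as stated there, solves $f(\lambda_n)=a_n$ with $f\in H^\infty$ for \emph{bounded} data $\{a_n\}\in l^\infty$. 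What you need is solvability in $H^2$ of the weighted problem $\sqrt{1-|\lambda_n|^2}\,f(\lambda_n)=a_n$ for $\{a_n\}\in l^2$. That is the Shapiro--Shields theorem, and it does not follow from the $H^\infty$ statement by mere re-reading; attributing it to ``Carleson's interpolation theorem of Section~2.1'' conceals precisely the work that remains. The standard way to bridge this is P.~Beurling's refinement of Carleson's theorem: under $(C)$ there exist $f_n\in H^\infty$ with $f_n(\lambda_m)=\delta_{nm}$ and $\sup_{z\in\D}\sum_n|f_n(z)|\le M$; one then checks that $f(z)=\sum_n a_n\,(1-|\lambda_n|^2)^{1/2}(1-\overline{\lambda_n}z)^{-1}f_n(z)$ interpolates and, by Cauchy--Schwarz and $\int_\T|1-\overline{\lambda_n}\zeta|^{-2}\,dm(\zeta)=(1-|\lambda_n|^2)^{-1}$, satisfies $\|f\|_2^2\le M^2\sum_n|a_n|^2$. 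Either supply such an argument or cite the $H^2$-interpolation theorem explicitly. A smaller instance of the same issue: the boundedness of $T$ uses that $(C)$ implies $\sum_n(1-|\lambda_n|^2)\delta_{\lambda_n}$ is a Carleson measure, which is standard but is also not contained in the statement of Section~2.1.
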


\begin{lemmaletterB} \cite[p. 211]{1}.
\label{LB}
Let  $\spanmat \mathcal K_{\Lambda, \theta} \neq K_{\theta}$. Then for every  $\mu \in \D \setminus \{\Lambda\} $
we have $ k_{\theta}(\mu, \cdot) \notin \spanmat \mathcal K_{\Lambda, \theta}$.

\end{lemmaletterB}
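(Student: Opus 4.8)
The plan is to prove the contrapositive: assuming $k_\theta(\mu, \cdot) \in \spanmat \mathcal K_{\Lambda, \theta}$ for some $\mu \in \D \setminus \Lambda$, I would show that $\spanmat \mathcal K_{\Lambda, \theta} = K_\theta$. The whole argument takes place in the orthogonal complement, so I first set $N = K_\theta \ominus \spanmat \mathcal K_{\Lambda, \theta}$ and note that, by the reproducing property $\langle f, k_\theta(\lambda, \cdot)\rangle = f(\lambda)$ valid for $f \in K_\theta$, a function $f \in K_\theta$ lies in $N$ if and only if $f(\lambda) = 0$ for every $\lambda \in \Lambda$. Thus $N = \{f \in K_\theta : f|_\Lambda = 0\}$, and proving the claim amounts to showing $N = \{0\}$. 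The hypothesis $k_\theta(\mu, \cdot) \in \spanmat \mathcal K_{\Lambda, \theta} = N^\perp$ (orthogonal complement taken inside $K_\theta$) translates, again by the reproducing property, into the single extra vanishing condition $f(\mu) = 0$ for every $f \in N$.

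The key step, which I expect to be the crux, is the following invariance property of the model space under division by a Blaschke factor: if $f \in K_\theta$ and $f(\mu) = 0$, then $f/b_\mu \in K_\theta$, where $b_\mu$ is the Blaschke factor vanishing at $\mu$. Indeed, since $f$ vanishes at $\mu$ the quotient $g := f/b_\mu$ is holomorphic in $\D$, and because $|b_\mu| = 1$ on $\T$ one has $|g| = |f|$ a.e. on $\T$, so $g \in H^2$ with $\|g\|_2 = \|f\|_2$. To see $g \perp \theta H^2$, I would compute, for arbitrary $h \in H^2$,
$$
\langle g, \theta h\rangle = \int_\T f\,\overline{b_\mu}\,\overline{\theta h}\,dm = \langle f, b_\mu \theta h\rangle = 0,
$$
the last equality holding because $b_\mu \theta h \in \theta H^2$ and $f \in K_\theta$. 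Hence $g \in K_\theta$.

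With this tool in hand the conclusion follows by an infinite-divisibility argument. Fix $f \in N$. Since every element of $N$ vanishes at $\mu$, we have $f(\mu) = 0$, so $f/b_\mu \in K_\theta$; moreover $f/b_\mu$ still vanishes on $\Lambda$ because $b_\mu(\lambda) \neq 0$ for $\lambda \in \Lambda$ (here I use $\mu \notin \Lambda$). Thus $f/b_\mu \in N$, whence $(f/b_\mu)(\mu) = 0$, i.e. $f$ has a zero of order at least two at $\mu$ and $f/b_\mu^2 \in N$. Iterating, $f/b_\mu^n \in N$ for every $n$, so $f$ has a zero of infinite order at $\mu$; as $f$ is holomorphic, $f \equiv 0$. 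Therefore $N = \{0\}$, which gives $\spanmat \mathcal K_{\Lambda, \theta} = K_\theta$ and proves the contrapositive. The only care needed is the degenerate case $\mu = 0$, where one simply takes $b_\mu(z) = z$ and the identical argument goes through, division by $b_\mu$ becoming the backward shift.
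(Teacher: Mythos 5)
Your proof is correct. Note that the paper itself gives no proof of this lemma: it is quoted from Nikolski \cite[p.~211]{1}, so there is no internal argument to compare against; your write-up is in fact a self-contained reconstruction of the classical argument behind that citation. The structure is sound: the orthogonal complement $N=K_\theta\ominus\spanmat\mathcal K_{\Lambda,\theta}$ is exactly $\{f\in K_\theta : f\vert_\Lambda=0\}$ by the reproducing property, the hypothesis adds the vanishing condition at $\mu$, and the division-by-$b_\mu$ invariance of $K_\theta$ lets you bootstrap to a zero of infinite order, forcing $N=\{0\}$. The only step stated too quickly is the claim that $g=f/b_\mu\in H^2$ "because $|g|=|f|$ a.e.\ on $\T$": holomorphy in $\D$ plus square-integrable boundary values does not by itself give $H^2$ membership. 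Here it does, either by the Smirnov maximum principle (since $g$ is a quotient of an $H^2$ function by an inner function with holomorphic quotient, hence lies in the Smirnov class $N^+$), or directly: writing $f(z)=(z-\mu)h(z)$ one has $\sup_{r<1}\int_\T|h(r\zeta)|^2\,dm(\zeta)\le (1-|\mu|)^{-2}\|f\|_2^2$, so $h\in H^2$ and $g=\mathrm{const}\cdot(1-\overline{\mu}z)h\in H^2$. With that one-line repair the argument is complete, including your correct observations that $b_\mu(\lambda)\neq 0$ for $\lambda\in\Lambda$ (this is where $\mu\notin\Lambda$ is used) and that the case $\mu=0$ is handled by $b_0(z)=z$.
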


\begin{theoremletterC} \cite[Vol.2, Part D, Theorem 4.4.6]{11}.
\label{ThC}
Let $\sup_{\lambda \in \Lambda} |\theta(\lambda)|<1$. The following assertions are equivalent:
\begin{itemize}
\item [(1)] The family $\widetilde  {\mathcal K}_{\Lambda, \theta}$ is a Riesz basis in $K_\theta$.
\item [(2)] $\Lambda \in (C)$ and the projection $P_{\theta} \vert K_B$ is an isomorphism onto $K_{\theta}$ (in particular, $\spanmat \mathcal K_{\Lambda, \theta}= K_{\theta}$). 
\item [(3)]  $\Lambda \in (C)$ and $\dist_{L^\infty}(\theta, BH^\infty)<1$, $\dist_{L^\infty}(B, \theta H^\infty)<1$.
\item [(4)] $\Lambda \in (C)$, $\dist_{L^\infty}(\theta, B H^\infty)<1$, and for some/every $\mu \in \D \setminus \Lambda$  we have $\dist_{L^\infty}(\theta , b_\mu B H^\infty)=1$.
\end{itemize}
\end{theoremletterC}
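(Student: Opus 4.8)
The plan is to prove the cyclic chain $(1)\Leftrightarrow(2)\Leftrightarrow(3)\Leftrightarrow(4)$, with the hypothesis $\sup_{\lambda\in\Lambda}|\theta(\lambda)|<1$ entering only through the elementary identity
$$P_\theta\widetilde k(\lambda,\cdot)=\sqrt{1-|\theta(\lambda)|^2}\,\widetilde k_\theta(\lambda,\cdot),$$
which comes from $\|k_\theta(\lambda,\cdot)\|_2^2=(1-|\theta(\lambda)|^2)/(1-|\lambda|^2)$. Under the hypothesis the scalars $\sqrt{1-|\theta(\lambda)|^2}$ are bounded above and bounded away from $0$, so multiplication by the corresponding diagonal on $\ell^2$ is an isomorphism; consequently $\widetilde{\mathcal K}_{\Lambda,\theta}$ is a Riesz basis in $K_\theta$ if and only if $\{P_\theta\widetilde k(\lambda,\cdot)\}_{\lambda\in\Lambda}$ is. This reduction is used repeatedly.

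For $(1)\Leftrightarrow(2)$ I would first note that a Riesz basis is uniformly minimal, and that uniform minimality of $\mathcal K_{\Lambda,\theta}$ forces $\Lambda\in(C)$ (recorded in the introduction, via \cite{5}); hence $\Lambda\in(C)$ is available on both sides, and by Theorem~A this makes $\widetilde{\mathcal K}_\Lambda$ a Riesz basis in $K_B$. The operator $P_\theta|_{K_B}\colon K_B\to K_\theta$ carries this Riesz basis exactly onto $\{P_\theta\widetilde k(\lambda,\cdot)\}$. Using the standard fact that a bounded operator taking a Riesz basis to a Riesz basis is an isomorphism onto (and conversely), the equivalence of (1) and (2) follows from the displayed diagonal identity.

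The analytic heart is $(2)\Leftrightarrow(3)$. Let $P_+$ be the Riesz projection onto $H^2$ and $P_-=I-P_+$. For $f\in K_B$ one has $\|P_\theta f\|^2=\|f\|^2-\|P_{\theta H^2}f\|^2$, so $P_\theta|_{K_B}$ is bounded below iff the cosine of the angle between $K_B$ and $\theta H^2$ is $<1$. That cosine equals $\|P_+M_{\overline\theta}|_{K_B}\|$, whose adjoint is $P_{K_B}M_\theta|_{H^2}$; a direct computation giving $\|P_{K_B}(\theta h)\|^2=\|P_-(\overline B\theta h)\|^2$ identifies this norm with $\|H_{\overline B\theta}\|$, and by Nehari's theorem
$$\|H_{\overline B\theta}\|=\dist_{L^\infty}(\overline B\theta,H^\infty)=\dist_{L^\infty}(\theta,BH^\infty).$$
Thus $P_\theta|_{K_B}$ is bounded below iff $\dist_{L^\infty}(\theta,BH^\infty)<1$, and symmetrically its adjoint $P_B|_{K_\theta}$ is bounded below iff $\dist_{L^\infty}(B,\theta H^\infty)<1$. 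Since a bounded-below operator with bounded-below adjoint is an isomorphism onto, (2) is equivalent to the conjunction of the two distance conditions, i.e. to (3). I expect producing the two distance formulas as Hankel norms to be the main obstacle; the rest of this step is formal.

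Finally, for $(3)\Leftrightarrow(4)$ — with $\Lambda\in(C)$ and $\dist_{L^\infty}(\theta,BH^\infty)<1$ common to both — I would work with the one-point extension $K_{b_\mu B}=K_B\oplus\C l_\mu$, where $l_\mu=B_\Lambda\widetilde k(\mu,\cdot)$. The angle identity of the previous step (with $B$ replaced by $b_\mu B$) gives $\dist_{L^\infty}(\theta,b_\mu BH^\infty)=1$ iff $P_\theta|_{K_{b_\mu B}}$ fails to be bounded below; and since $P_\theta|_{K_B}$ is already bounded below with closed range $R:=\spanmat\mathcal K_{\Lambda,\theta}$, an elementary rank-one extension argument shows this failure is equivalent to $\dist_{H^2}(P_\theta l_\mu,R)=0$, i.e. to $P_\theta l_\mu\in R$. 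Now $K_\theta\ominus R=K_\theta\cap BH^2$, so $R=K_\theta$ iff $K_\theta\cap BH^2=\{0\}$; and for $g=Bg_1\in K_\theta\cap BH^2$ the reproducing property gives $\langle P_\theta l_\mu,g\rangle=\sqrt{1-|\mu|^2}\,\overline{g_1(\mu)}$ while $\langle k_\theta(\mu,\cdot),g\rangle=\overline{B(\mu)}\,\overline{g_1(\mu)}$. Because $B(\mu)\neq0$ for $\mu\in\D\setminus\Lambda$, these vanish simultaneously, whence $P_\theta l_\mu\in R\iff k_\theta(\mu,\cdot)\in R$. Combining this with Lemma~B (which says $R\neq K_\theta$ forces $k_\theta(\mu,\cdot)\notin R$ for every $\mu$) shows that $P_\theta l_\mu\in R$ for some $\mu$ already implies $R=K_\theta$, hence $\dist_{L^\infty}(B,\theta H^\infty)<1$ by $(2)\Leftrightarrow(3)$; conversely (3) gives $R=K_\theta$, so $P_\theta l_\mu\in R$ for every $\mu$. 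This yields the equivalence together with the ``some/every'' clause. I expect the fiddliest part of the write-up to be exactly this last translation — from $\dist_{L^\infty}(\theta,b_\mu BH^\infty)=1$ to a statement about $R$, and then matching $P_\theta l_\mu$ against $k_\theta(\mu,\cdot)$.
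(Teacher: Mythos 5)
Theorem~C is not proved in the paper at all: it is quoted as background from \cite[Vol.2, Part D, Theorem 4.4.6]{11}, so there is no internal proof to compare against. Your argument is correct and is essentially the classical proof of that cited result (cf.\ \cite{2}, \cite{11}): the diagonal reduction $P_\theta\widetilde k(\lambda,\cdot)=\sqrt{1-|\theta(\lambda)|^2}\,\widetilde k_\theta(\lambda,\cdot)$ under $\sup_{\lambda\in\Lambda}|\theta(\lambda)|<1$; the Nehari identification of the angles between $K_B$, $\theta H^2$ and between $K_\theta$, $BH^2$ with the Hankel norms $\|H_{\overline{B}\theta}\|$ and $\|H_{\overline{\theta}B}\|$, which converts invertibility of $P_\theta\vert K_B$ into the two distance conditions; and the one-point augmentation $K_{b_\mu B}=K_B\oplus\C l_\mu$ combined with Lemma~B for clause (4). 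The only external inputs you rely on (Theorem~A, Lemma~B, and the fact that uniform minimality of $\mathcal K_{\Lambda,\theta}$ forces $\Lambda\in(C)$, i.e.\ \cite[Corollary 3.4]{5}) are precisely facts the paper itself records, so your proof is self-contained relative to the paper's toolkit.
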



\begin{theoremletterD}\cite[Vol.2, Part D, Theorem 4.4.8]{11}.
\label{ThD}
Let $\sup_{\lambda \in \Lambda} |\theta(\lambda)|<1$. The following assertions are equivalent:
\begin{itemize}
\item [(1)]  The family $\widetilde  {\mathcal K}_{\Lambda, \theta}$ is a Riesz sequence.
\item [(2)] $\Lambda \in (C)$ and the projection $P_{\theta} \vert K_B$  is an isomorphism  into $K_{\theta}$. 
\item [(3)] $\Lambda \in (C)$ and $\dist_{L^\infty}(\theta, BH^\infty)<1$.
\end{itemize}
\end{theoremletterD}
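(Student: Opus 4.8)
The plan is to prove the two equivalences $(2)\Leftrightarrow(3)$ and $(1)\Leftrightarrow(2)$ separately. Since the condition $\Lambda\in(C)$ occurs in both $(2)$ and $(3)$, for $(2)\Leftrightarrow(3)$ it suffices to show, for an arbitrary inner function $B$, that $P_\theta\vert K_B$ is bounded below if and only if $\dist_{L^\infty}(\theta,BH^\infty)<1$; this is exactly the isomorphism/distance equivalence already recorded after \eqref{distless1}, and I would establish it by a Hankel computation.

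For $(2)\Leftrightarrow(3)$ I argue operator-theoretically. Set $A=P_\theta\vert K_B:K_B\to K_\theta$ and write $P_B$ for the orthogonal projection onto $K_B$, so that $A^*=P_B\vert K_\theta$ and $A^*A=P_BP_\theta\vert K_B$ acts on $K_B$. Since $H^2=K_\theta\oplus\theta H^2$, for $f\in K_B$ one has $\langle(I-A^*A)f,f\rangle=\|(I-P_\theta)f\|^2$, whence $A$ is bounded below by some $\delta>0$ iff $\|I-A^*A\|=\sup_{\|f\|=1}\|(I-P_\theta)f\|^2<1$. Everything thus reduces to computing $c:=\|(I-P_\theta)\vert K_B\|$. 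Using $I-P_\theta=\theta P_+\overline\theta$ on $H^2$ (so $\|(I-P_\theta)f\|=\|P_+(\overline\theta f)\|$), the operator $\Gamma:f\mapsto P_+(\overline\theta f)$ from $K_B$ to $H^2$ has adjoint $\Gamma^*:g\mapsto P_B(\theta g)$; hence $c=\|\Gamma^*\|=\sup_{\|g\|=1}\|P_B(\theta g)\|$. A direct expansion gives, for $\|g\|=1$, $\|P_B(\theta g)\|^2=1-\|P_+(\overline B\theta g)\|^2=\|P_-(\overline B\theta g)\|^2=\|H_{\overline B\theta}g\|^2$, so $c=\|H_{\overline B\theta}\|$; by Nehari's theorem $c=\dist_{L^\infty}(\overline B\theta,H^\infty)=\dist_{L^\infty}(\theta,BH^\infty)$. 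Consequently $A$ is bounded below (equivalently an isomorphism into $K_\theta$, injectivity being automatic) iff $c<1$, which is $(2)\Leftrightarrow(3)$.

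For $(1)\Leftrightarrow(2)$ the tools are Theorem A and the comparability of normalizations granted by $\sup_\lambda|\theta(\lambda)|<1$. Since $P_\theta k(\lambda,\cdot)=k_\theta(\lambda,\cdot)$, one gets $P_\theta\widetilde k(\lambda,\cdot)=d_\lambda\,\widetilde k_\theta(\lambda,\cdot)$ with $d_\lambda=\sqrt{1-|\theta(\lambda)|^2}\in[\sqrt{1-s^2},\,1]$, $s:=\sup_\lambda|\theta(\lambda)|<1$, so the $d_\lambda$ are bounded above and below away from $0$. If $(2)$ holds, then by Theorem A $\widetilde{\mathcal K}_\Lambda$ is a Riesz basis in $K_B$, so for $f=\sum a_\lambda\widetilde k(\lambda,\cdot)$ we have $\|f\|^2\asymp\sum|a_\lambda|^2$; applying the bounded-below map $P_\theta$ and the $\ell^2$-bijective rescaling $b_\lambda=a_\lambda d_\lambda$ gives $\|\sum b_\lambda\widetilde k_\theta(\lambda,\cdot)\|=\|P_\theta f\|\asymp(\sum|b_\lambda|^2)^{1/2}$, i.e. $(1)$. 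Conversely, if $(1)$ holds then $\widetilde{\mathcal K}_{\Lambda,\theta}$ is uniformly minimal, so $\Lambda\in(C)$ by \cite[Corollary 3.4]{5}; Theorem A again makes $\widetilde{\mathcal K}_\Lambda$ a Riesz basis of $K_B$, and expanding $f\in K_B$ and combining the lower Riesz bound for $\widetilde{\mathcal K}_{\Lambda,\theta}$ with $d_\lambda\ge\sqrt{1-s^2}$ yields $\|P_\theta f\|\gtrsim\|f\|$, so $P_\theta\vert K_B$ is bounded below, which is $(2)$.

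The step I expect to be the main obstacle is the identification $c=\|H_{\overline B\theta}\|$ inside $(2)\Leftrightarrow(3)$: one must correctly handle the analytic and co-analytic projections $P_\pm$, verify $I-P_\theta=\theta P_+\overline\theta$ and the adjoint formula $\Gamma^*g=P_B(\theta g)$, and then invoke Nehari's theorem. Once this is in place, the remainder is bookkeeping, the key structural observation being that $\sup_\lambda|\theta(\lambda)|<1$ makes the normalizations $\|k(\lambda,\cdot)\|$ and $\|k_\theta(\lambda,\cdot)\|$ uniformly comparable, which lets geometric properties pass back and forth through $P_\theta\vert K_B$.
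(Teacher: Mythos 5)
Your proof is correct. Note, however, that the paper does not actually prove Theorem~D: it is quoted without proof from Nikolski \cite[Vol.2, Part D, Theorem 4.4.8]{11}, so there is no internal argument to compare against, and what you have written is essentially the standard proof from that source. Concretely, your Hankel-norm identity $\|(I-P_\theta)\vert K_B\|=\|H_{\theta\overline B}\|=\dist_{L^\infty}(\theta,BH^\infty)$ (via Nehari's theorem) settles $(2)\Leftrightarrow(3)$, and the transfer of the Riesz property through the isomorphism $P_\theta\vert K_B$ --- using Theorem~A, the uniform comparability $\sqrt{1-|\theta(\lambda)|^2}\in[\sqrt{1-s^2},\,1]$ furnished by $\sup_{\lambda\in\Lambda}|\theta(\lambda)|<1$, and the fact that uniform minimality of $\widetilde{\mathcal K}_{\Lambda,\theta}$ forces $\Lambda\in(C)$, i.e.\ \cite[Corollary 3.4]{5}, which the paper itself invokes for exactly this purpose --- settles $(1)\Leftrightarrow(2)$.
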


Combining two previous theorems, we get the following  result.

\begin{propositionletter?} 
\label{PE}
Let  $\sup_{\lambda \in \Lambda} |\theta(\lambda)|<1$. The following assertions are equivalent:
\begin{itemize}
\item [(1)]  The family $\mathcal K_{\Lambda, \theta}$ is a Riesz basis in the closure of its linear hull and is not complete in $K_\theta$.
\item [(2)]  $\Lambda \in (C)$ and the projection $P_{\theta} \vert K_B$  is an isomorphism onto a proper closed subspace of $K_{\theta}$. 
\item [(3)]  $\Lambda \in (C)$ and $\dist_{L^\infty}(\theta, BH^\infty)<1$, $\dist_{L^\infty}(B, \theta H^\infty)=1$.
\item [(4)]   $\Lambda \in (C)$ and for some/every $\mu \in \D$  we have \newline\noindent $\dist_{L^\infty}(\theta , b_\mu B H^\infty)<1$.
\end{itemize}
\end{propositionletter?}

\subsection{Distances}

%
%


Let  $\theta$ be an inner function, $\Lambda=\{\lambda _n\}_{n \ge 0} \in (B)$, $\mu \in \D \setminus \Lambda $, $ \Lambda_n=\{\lambda _k\}_{k=0}^n$, $B_{k,n}=\prod_{j=k}^n b_{\lambda_j}$.

Furthermore, let $L=\{l_n\}_{n \ge 0}$ be the Malmquist--Walsh basis of $K_{B_\Lambda}$, $l_0=\widetilde k(\lambda_0, \cdot)$, 
$ l_n=B_{0,n-1}\widetilde k(\lambda_n, \cdot)$, $ n \ge 1$.

Denote by $\{\gamma_n\}_{n \ge 0} $, $\{\theta_n\}_{n \ge 0} $  the Schur--Nevanlinna coefficients and functions corresponding to $\Lambda, \theta$.
Here are several distance formulas from \cite{5} we need in this paper.
\begin{itemize}
\item [(1)]  By \cite[Proposition 3.1]{5}, we have
\begin{multline}
\dist_{H^2} ^2(P_\theta l_{n}(z), \spanmat \mathcal  K_{\Lambda_{n-1}, \theta})  \\
= (1-|\theta(\lambda_n)|^2) \prod_{k=1}^n \frac {1-|\theta_k(\lambda_n)|^2}{1-|\theta_k(\lambda_n)|^2|b_{\lambda_{k-1}}(\lambda_n)|^2}.
\label{distMWlambdan}
\end{multline}

\item [(2)]  By \cite[Proposition 3.2]{5} 
we have
\begin{equation}
\dist_{H^2} ^2(\widetilde k_{\theta}(\mu, \cdot), \spanmat \mathcal  K_{\Lambda_n, \theta})  
= |B_{0,n}(\mu)| ^2\prod_{k=1}^n \frac {1-|\theta_k(\mu)|^2}{1-|\theta_k(\mu)|^2|b_{\lambda_{k-1}}(\mu)|^2}.
\label{distNRKmun}
\end{equation}
\item [(3)]Passing to the limit $n \to \infty$, we conclude that 
\begin{equation}
\dist_{H^2} ^2(\widetilde k_{\theta}(\mu, \cdot), \spanmat  \mathcal K_{\Lambda, \theta})  
= |B_\Lambda(\mu)| ^2\prod_{k=1}^\infty \frac {1-|\theta_k(\mu)|^2}{1-|\theta_k(\mu)|^2|b_{\lambda_{k-1}}(\mu)|^2}.
\label{distNRKmuinfty}
\end{equation}

and thus
\begin{multline}
\dist_{H^2} ^2(\widetilde k_{\theta}(\lambda_n, \cdot), \spanmat  \mathcal K_{\Lambda \setminus \{\lambda_n\}, \theta})\\ 
=|B_{\Lambda \setminus \{\lambda_n\}}(\lambda_n)| ^2\prod_{k \neq n}^\infty \frac {1-|\theta_k(\lambda_n)|^2}{1-|\theta_k(\lambda_n)|^2|b_{\lambda_{k-1}}(\lambda_n)|^2}.
\label{distNRKlambdaninfty}
\end{multline}

\end{itemize}


The  following useful identity can be verified by induction:
\begin{multline}
(1-|\theta(\mu)|^2)  \prod_{k=1}^{n} \frac {1-|\theta_k(\mu)|^2}{1-|\theta_k(\mu)|^2|b_{\lambda_{k-1}}(\mu)|^2}\\= (1-|\theta_n(\mu)|^2) \prod_{k=0}^{n-1} \frac {|1-\overline \gamma_k \theta_k(\mu)|^2}{1- | \gamma_k|^2}.
\label{usefulidentity}
\end{multline}
%

Combining \eqref{usefulidentity} with  \eqref{distNRKmun} 
we obtain
\begin{multline}
\dist_{H^2} ^2(\widetilde k_\theta(\mu, \cdot), \spanmat \mathcal  K_{\Lambda_n, \theta})  \\
= \frac  { |B_{0,n}(\mu)| ^2} {1-|\theta(\mu)|^2} (1-|\theta_n(\mu)|^2) \prod_{k=0}^{n-1} \frac {|1-\overline \gamma_k \theta_k(\mu)|^2}{1- | \gamma_k|^2}.
\label{distNRKmun2}
\end{multline}
Once again, passing to the limit $n \to \infty$, we conclude that
\begin{multline}
\dist_{H^2} ^2(\widetilde k_\theta(\mu, \cdot), \spanmat  \mathcal K_{\Lambda, \theta})  \\
= \frac  { |B_\Lambda(\mu)| ^2} {1-|\theta(\mu)|^2} \lim_{n\to\infty} (1-|\theta_n(\mu)|^2)  \prod_{k=0}^{n-1} \frac {|1-\overline \gamma_k \theta_k(\mu)|^2}{1- | \gamma_k|^2}.
\label{distNRKmuinfty2}
\end{multline}

\subsection{Hankel operators and model operators}
Let ${{\varphi}} \in L^\infty$. The Hankel operator  $H_\varphi : H^2 \to H^2_{-}$ is defined by
$$
H_{\varphi} f := P_-  M_{  \varphi} f, \qquad f \in H^2, 
$$
where  $H_{-}^2=L^2 \ominus  H^2$, $P_-$ is the projection onto $H_{-}^2$, $M_{a} $ is the operator of multiplication by the function $a$.

Here are several  facts we use later on. 
We denote by $C$ the space $C(\T)$ of continuous functions on $\T$.

\begin{thmHa} \cite[Vol.1, Part B, Chapter 2, p.214]{11}. 
The Hankel operator $H_{ \varphi}$ is compact if and only if ${\varphi} \in H^\infty+C$.
\end{thmHa}

\begin{lemmaletterF} \cite[Appendix 4, p.311]{1}. 
\label{LF}
Let ${\varphi}  \in H^\infty+C$, $|{{\varphi}}|=1$ a.e. $\T$, and $\dist_{ L^\infty}(\overline {\varphi}, H^\infty+C)<1$. Then 
$\overline  {\varphi}\in H^\infty+C$.
\end{lemmaletterF}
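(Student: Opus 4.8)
The plan is to translate both membership statements into compactness of Hankel operators via Hartman's theorem and then to argue inside the Calkin algebra. Since $\varphi\in H^\infty+C$, Hartman's theorem gives that $H_\varphi$ is compact; the goal is to show that $H_{\overline\varphi}$ is compact as well, for then Hartman's theorem will yield $\overline\varphi\in H^\infty+C$. Let $P_+$ denote the orthogonal projection of $L^2(\T)$ onto $H^2$, and for $\psi\in L^\infty$ write $T_\psi f=P_+M_\psi f$ ($f\in H^2$) for the Toeplitz operator with symbol $\psi$, so that $T_\psi^*=T_{\overline\psi}$ and $H_\psi=P_-M_\psi P_+$, $H_\psi^*=P_+M_{\overline\psi}P_-$. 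Splitting $M_{\overline\alpha\psi}=M_{\overline\alpha}M_\psi$ by means of $P_++P_-=I$ produces the elementary identity $T_{\overline\alpha}T_\psi=T_{\overline\alpha\psi}-H_\alpha^*H_\psi$; specializing $\alpha=\psi=\varphi$, then $\alpha=\overline\varphi,\ \psi=\overline\varphi$, and using $|\varphi|^2=1$ a.e.\ so that $T_{|\varphi|^2}=I$, I obtain the two key identities
\begin{equation*}
T_\varphi^*T_\varphi=I-H_\varphi^*H_\varphi,\qquad T_\varphi T_\varphi^*=I-H_{\overline\varphi}^*H_{\overline\varphi}.
\end{equation*}

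Next I pass to the Calkin algebra. Let $\pi$ be the quotient map onto the Calkin algebra, recall that it is a $C^*$-algebra whose norm is the essential norm $\|\cdot\|_e$, and put $t=\pi(T_\varphi)$. Because $H_\varphi$ is compact, the first identity gives $t^*t=\pi(I)=1$, so $t$ is an isometry and $p:=tt^*$ is a projection (indeed $p^*=p$ and $p^2=t(t^*t)t^*=tt^*=p$). From the second identity, $\pi\bigl(H_{\overline\varphi}^*H_{\overline\varphi}\bigr)=1-p$, whence $\|1-p\|=\|H_{\overline\varphi}^*H_{\overline\varphi}\|_e=\|H_{\overline\varphi}\|_e^2$. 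Now the hypothesis $\dist_{L^\infty}(\overline\varphi,H^\infty+C)<1$ furnishes $g\in H^\infty+C$ with $\|\overline\varphi-g\|_\infty<1$; since $H_g$ is compact by Hartman's theorem, $\pi(H_{\overline\varphi})=\pi(H_{\overline\varphi-g})$, and using $\|H_\psi\|\le\|\psi\|_\infty$ I get $\|H_{\overline\varphi}\|_e\le\|H_{\overline\varphi-g}\|\le\|\overline\varphi-g\|_\infty<1$. Thus $1-p$ is a projection of norm strictly less than $1$; as a projection in a $C^*$-algebra has norm $0$ or $1$, this forces $1-p=0$, i.e.\ $H_{\overline\varphi}^*H_{\overline\varphi}$ is compact. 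Hence $|H_{\overline\varphi}|=(H_{\overline\varphi}^*H_{\overline\varphi})^{1/2}$ is compact and, by the polar decomposition, so is $H_{\overline\varphi}$; Hartman's theorem then gives $\overline\varphi\in H^\infty+C$, as desired.

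The only computational step is the derivation of the two Toeplitz--Hankel identities, but these are classical and drop out immediately from $P_++P_-=I$ together with $|\varphi|=1$ a.e., so I expect no real difficulty there. The conceptual heart of the argument is the remark that the hypotheses say precisely that $\pi(T_\varphi)$ is an isometry in the Calkin algebra whose defect projection $1-tt^*$ has essential norm $<1$; the rigidity of projections in a $C^*$-algebra (norm $0$ or $1$) then upgrades this isometry to a unitary and yields compactness of $H_{\overline\varphi}$. Note that I never need the full equality $\|H_\psi\|_e=\dist_{L^\infty}(\psi,H^\infty+C)$, only the elementary upper bound $\|H_{\overline\varphi}\|_e\le\dist_{L^\infty}(\overline\varphi,H^\infty+C)$, which keeps the whole argument self-contained modulo Hartman's theorem.
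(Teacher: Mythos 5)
The first thing to say is that the paper contains no proof of Lemma~F at all: it is quoted verbatim from Nikolski's treatise \cite{1} (Appendix~4, p.~311), so there is no in-paper argument to compare yours against, and I can only judge your proof on its own terms --- it is correct. The two identities $T_\varphi^*T_\varphi=I-H_\varphi^*H_\varphi$ and $T_\varphi T_\varphi^*=I-H_{\overline\varphi}^*H_{\overline\varphi}$ are the standard Toeplitz--Hankel relations for a unimodular symbol; Hartman's theorem (which the paper itself records) converts $\varphi\in H^\infty+C$ into compactness of $H_\varphi$, so that $t=\pi(T_\varphi)$ is an isometry in the Calkin algebra; the distance hypothesis gives $\|H_{\overline\varphi}\|_e\le\dist_{L^\infty}(\overline\varphi,H^\infty+C)<1$; and the rigidity of projections in a $C^*$-algebra (norm $0$ or $1$) then forces the defect projection $1-tt^*$ to vanish, i.e.\ $H_{\overline\varphi}^*H_{\overline\varphi}$ is compact, whence $H_{\overline\varphi}$ is compact and Hartman's theorem finishes the argument. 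This is the classical route (an isometry in the Calkin algebra whose defect projection has norm $<1$ is unitary), and it is self-contained modulo Hartman's theorem, exactly as you claim. One point deserves a remark: since $H_{\overline\varphi}$ maps $H^2$ into $H^2_-$, the equality $\|H_{\overline\varphi}^*H_{\overline\varphi}\|_e=\|H_{\overline\varphi}\|_e^2$ that you invoke is not literally the $C^*$-identity inside a single Calkin algebra (the operators live between different spaces); but your argument only needs the inequality $\|1-p\|=\|H_{\overline\varphi}^*H_{\overline\varphi}\|_e\le\|H_{\overline\varphi}\|_e^2$, and that direction is immediate because $(H_{\overline\varphi}-K)^*(H_{\overline\varphi}-K)$ is a compact perturbation of $H_{\overline\varphi}^*H_{\overline\varphi}$ for every compact $K$, so $\|1-p\|\le\|H_{\overline\varphi}-K\|^2$ for all such $K$. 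With that reading, every step is justified and the proof stands.
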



Let $V$ be an inner function. 
The model operator  $T_V:K_V\to K_V$ is defined by
$$
T_Vg:=P_{V} M_z g,  \qquad g \in K_V.
$$

We need the functional calculus for the model operator (see \cite[Lecture III]{1}): 
given $\psi \in H^\infty$, the mapping $\psi \to \psi (T_V)$ is defined by 
$\psi (T_V)=P_VM_\psi\vert K_V$, $[\psi (T_V)]^*=P_+M_{\overline {\psi }}\vert K_V$.  
The following formula relates the Hankel and the model operators (see \cite[Appendix 4, p.302, formula (1)]{1}):
\begin{equation}
\psi (T_V)P_V = M_VH_{\psi \overline {V}}, \qquad \psi \in H^\infty.
\label{connectionFormula}
\end{equation}

\begin{lemmaletterG} \cite[Lemma 2.2]{5}. 
\label{LG}
Let $\theta$ be an inner function, $\Lambda \in (B)$, and let $L=\{l_n\}_{n \ge 0}$ be the  Malmquist--Walsh basis  corresponding to $\Lambda$.
Then the Gram matrix of the family $ \{P_\theta l_n\}_{n \ge 0}$  is equal to $I-AA^*$, where $A$ is the matrix of $\theta(T_{B_\Lambda})$ in the basis $L$.
 \end{lemmaletterG}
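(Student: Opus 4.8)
The plan is to compute the Gram matrix entry by entry and to recognize the resulting off-diagonal contribution as the matrix of $\theta(T_{B})$ followed by its adjoint, where I abbreviate $B=B_\Lambda$. Writing $G_{mn}=\langle P_\theta l_n, P_\theta l_m\rangle$ for the Gram matrix of $\{P_\theta l_n\}_{n\ge 0}$, I would first exploit the orthogonal decomposition $H^2 = K_\theta \oplus \theta H^2$: each $l_n$ splits as $l_n = P_\theta l_n + P_{\theta H^2} l_n$, so pairing $l_n$ against $l_m$ gives
$$
\langle l_n, l_m\rangle = \langle P_\theta l_n, P_\theta l_m\rangle + \langle P_{\theta H^2} l_n, P_{\theta H^2} l_m\rangle.
$$
Since $L$ is orthonormal in $K_{B}$ the left-hand side is $\delta_{mn}$, so
$$
G_{mn} = \delta_{mn} - \langle P_{\theta H^2} l_n, P_{\theta H^2} l_m\rangle.
$$
This already isolates the identity $I$ and reduces the statement to identifying the remaining inner product with $(AA^*)_{mn}$.

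For that remaining term I would use the explicit formula $P_{\theta H^2} f = \theta\,P_+(\bar\theta f)$ for the projection onto $\theta H^2$. Because multiplication by the inner function $\theta$ is isometric on $H^2$, this collapses to $\langle P_{\theta H^2} l_n, P_{\theta H^2} l_m\rangle = \langle P_+(\bar\theta l_n), P_+(\bar\theta l_m)\rangle$. Now the functional calculus for the model operator supplies $[\theta(T_{B})]^* = P_+ M_{\bar\theta}\vert K_{B}$, so for $l_n \in K_{B}$ one has $P_+(\bar\theta l_n) = [\theta(T_{B})]^* l_n$, whence
$$
\langle P_+(\bar\theta l_n), P_+(\bar\theta l_m)\rangle = \langle [\theta(T_{B})]^* l_n, [\theta(T_{B})]^* l_m\rangle = \langle \theta(T_{B})[\theta(T_{B})]^* l_n, l_m\rangle .
$$
With the matrix of $\theta(T_B)$ in the orthonormal basis $L$ read off as $A_{mn}=\langle \theta(T_B) l_n, l_m\rangle$, the right-hand side is exactly $(AA^*)_{mn}$, and substituting back gives $G_{mn} = \delta_{mn} - (AA^*)_{mn}$, i.e. $G = I - AA^*$.

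The one step that demands genuine care, rather than routine bookkeeping, is the identification $P_+(\bar\theta l_n) = [\theta(T_{B})]^* l_n$: this presupposes that $P_+(\bar\theta l_n)$ actually lands in $K_{B}$, so that it coincides with the $K_B$-adjoint of $\theta(T_B)$ applied to $l_n$ and not merely with $P_+ M_{\bar\theta} l_n$ viewed in $H^2$. I would check this directly, noting that for $Bg\in BH^2$ with $g\in H^2$ one has $\langle P_+(\bar\theta l_n), Bg\rangle = \langle l_n, B\theta g\rangle = 0$ since $l_n\perp BH^2$ and $\theta g\in H^2$. I would also record that $\theta(T_{B})$ is a contraction, being the functional calculus of a function in the closed unit ball $\mathcal B$ of $H^\infty$ at $T_B$, so that $A$, $AA^*$ and $G$ all define bounded operators on $l^2$ and the entrywise identity upgrades to the claimed operator identity. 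Finally, since both $G$ and $AA^*$ are Hermitian, the conclusion is insensitive to the precise indexing conventions chosen for the Gram matrix and for the operator matrix, provided the two are chosen compatibly.
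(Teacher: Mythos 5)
Your proof is correct. The paper does not actually prove this lemma --- it quotes it from \cite[Lemma 2.2]{5} --- but your argument is the standard derivation and almost surely the one in that reference: the Pythagoras splitting $\delta_{mn}=\langle P_\theta l_n,P_\theta l_m\rangle+\langle P_{\theta H^2}l_n,P_{\theta H^2}l_m\rangle$, the projection formula $P_{\theta H^2}=M_\theta P_+M_{\overline\theta}$, and the adjoint identity $[\theta(T_{B_\Lambda})]^*=P_+M_{\overline\theta}\vert K_{B_\Lambda}$ that the paper itself records in Section 2.4; your verification that $P_+(\overline\theta l_n)$ lies in $K_{B_\Lambda}$ is precisely the point needed to make that identification legitimate.
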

 
\begin{lemmaletterH} \cite[Corollary 4.8]{5}. 
\label{LH}
Let $\Lambda \in (C)$  and let $\lim_{n\to\infty} \te(\lambda_n)=0$.
Then the following assertions are equivalent:
\begin{itemize}
\item[(1)]The family $\mathcal K_{\Lambda,\theta}$ is minimal.
\item[(2)]The family $\widetilde {\mathcal K}_{\Lambda,\theta}$ is a Riesz basis
in the closure of its linear hull.
\item[(3)] The class $\te+BH^\infty$ contains a non-extreme point of $\mathcal B$.
\item[(4)] $\dist_{L^\infty} (\te, BH^\infty)<1$.
\end{itemize}
\end{lemmaletterH}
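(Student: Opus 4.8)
The plan is to establish the cycle by treating $(1)\Leftrightarrow(2)\Leftrightarrow(4)$ by operator theory and splicing in $(3)$ through the de~Leeuw--Rudin description of extreme points. First I would record a reduction: since $\theta(\lambda_n)\to 0$ and $\theta$ (being inner and non-constant) satisfies $|\theta(z)|<1$ on $\D$, the sequence $\{|\theta(\lambda_n)|\}$ exceeds any $\varepsilon$ only finitely often, so $\sup_n|\theta(\lambda_n)|<1$ and Theorem~D is available. By definition ``$\widetilde{\mathcal K}_{\Lambda,\theta}$ is a Riesz basis in the closure of its linear hull'' means ``$\widetilde{\mathcal K}_{\Lambda,\theta}$ is a Riesz sequence''; hence, using the standing hypothesis $\Lambda\in(C)$, Theorem~D shows that $(2)$ is equivalent to $\dist_{L^\infty}(\theta,BH^\infty)<1$, i.e.\ to $(4)$, and also to the statement that $P_\theta|K_B$ is bounded below (an isomorphism into $K_\theta$). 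Moreover $(2)\Rightarrow(1)$ is immediate, since a Riesz sequence is minimal and minimality is insensitive to normalizing each vector.

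It remains to prove $(1)\Rightarrow(2)$, which I would run in contrapositive form: if $P_\theta|K_B$ is not bounded below, then $\mathcal K_{\Lambda,\theta}$ is not minimal. Here the vanishing hypothesis is decisive. By Proposition~\ref{L5} the operator $\theta(T_B)|K_B$ is compact, so its matrix $A$ in the Malmquist--Walsh basis $L$ gives a compact positive $AA^*$ with $\|AA^*\|\le 1$; by Lemma~G the Gram matrix of $\{P_\theta l_n\}$ is $I-AA^*$, that is $(P_\theta|K_B)^*(P_\theta|K_B)=I-AA^*$. If $P_\theta|K_B$ is not bounded below then the nonnegative operator $I-AA^*$ is not bounded below, so $\inf\sigma(I-AA^*)=1-\|AA^*\|=0$; as $AA^*$ is compact, $\|AA^*\|=1$ is an attained eigenvalue, whence $\ker(P_\theta|K_B)=\ker(I-AA^*)\neq\{0\}$. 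Choosing $0\neq u\in K_B$ with $P_\theta u=0$ and expanding $u=\sum_n c_n\widetilde k(\lambda_n,\cdot)$ in the Riesz basis $\widetilde{\mathcal K}_\Lambda$ of $K_B$ (Theorem~A, using $\Lambda\in(C)$), and then projecting via $P_\theta\widetilde k(\lambda_n,\cdot)=(1-|\theta(\lambda_n)|^2)^{1/2}\widetilde k_\theta(\lambda_n,\cdot)$, I obtain a nontrivial $l^2$-relation $\sum_n c_n'\widetilde k_\theta(\lambda_n,\cdot)=0$ with $c_n'=c_n(1-|\theta(\lambda_n)|^2)^{1/2}$. Since the synthesis map of $\widetilde{\mathcal K}_{\Lambda,\theta}$ is bounded (it factors through the bounded synthesis of $\widetilde{\mathcal K}_\Lambda$ followed by $P_\theta$, the scalars $(1-|\theta(\lambda_n)|^2)^{\pm 1/2}$ being bounded above and below), any $n_0$ with $c_{n_0}'\neq 0$ yields $\widetilde k_\theta(\lambda_{n_0},\cdot)\in\spanmat\{\widetilde k_\theta(\lambda_n,\cdot):n\neq n_0\}$, so $\mathcal K_{\Lambda,\theta}$ is not minimal. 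The point is that compactness is exactly what upgrades ``not bounded below'' to ``has a kernel''.

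Finally I would treat $(3)\Leftrightarrow(4)$. For $(4)\Rightarrow(3)$, if $\|\theta-Bg\|_\infty<1$ for some $g\in H^\infty$, then $f:=\theta-Bg$ lies in the coset $\theta+BH^\infty$ and satisfies $1-|f|^2\ge 1-\|f\|_\infty^2>0$ a.e., so $\log(1-|f|^2)\in L^1(\T)$ and $f$ is a non-extreme point of $\mathcal B$ by the de~Leeuw--Rudin criterion. For $(3)\Rightarrow(4)$, starting from a non-extreme $f\in\mathcal B\cap(\theta+BH^\infty)$, I would twist by the unimodular factor $\bar B$: since $\dist_{L^\infty}(\theta,BH^\infty)=\dist_{L^\infty}(\bar B\theta,H^\infty)=\|H_{\bar B\theta}\|$ (Nehari) and $\bar B\theta-\bar Bf\in H^\infty$, one has $\|H_{\bar B\theta}\|=\|H_{\bar Bf}\|$. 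Taking the outer function $a$ with $|a|^2=1-|f|^2$ (which exists precisely because $\log(1-|f|^2)\in L^1$) and using the identity $\|g\|_2^2-\|H_{\bar Bf}g\|_2^2=\|T_{\bar Bf}g\|_2^2+\|ag\|_2^2$, one deduces $\|H_{\bar Bf}\|<1$, i.e.\ $(4)$. This closes $(1)\Leftrightarrow(2)\Leftrightarrow(3)\Leftrightarrow(4)$.

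The main obstacle is the direction $(3)\Rightarrow(4)$: the crude bound $\|g\|_2^2-\|H_{\bar Bf}g\|_2^2\ge\|ag\|_2^2$ only gives $\|H_{\bar Bf}\|\le 1$, and extracting the strict inequality requires genuinely exploiting that $a$ is outer while simultaneously controlling $T_{\bar Bf}g$, rather than discarding the Toeplitz term. On the operator-theoretic side the only delicate step is transferring $\ker(P_\theta|K_B)\neq\{0\}$ into a non-minimality relation for $\mathcal K_{\Lambda,\theta}$, and this is rendered routine by the Carleson hypothesis (Theorem~A) together with the uniform two-sided bounds on the factors $(1-|\theta(\lambda_n)|^2)^{1/2}$ supplied by $\sup_n|\theta(\lambda_n)|<1$.
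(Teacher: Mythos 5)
You should note at the outset that the paper itself offers no proof of this lemma: it is quoted verbatim from \cite[Corollary 4.8]{5}, so there is no in-paper argument to compare against, and your proposal must stand on its own. Most of it does. The reduction to $\sup_n|\theta(\lambda_n)|<1$, the identification of (2) with (4) via Theorem~D (given the standing hypothesis $\Lambda\in(C)$), the trivial implication (2)$\Rightarrow$(1), and the implication (4)$\Rightarrow$(3) are all correct. Your contrapositive proof of (1)$\Rightarrow$(2) is also sound and is very much in the spirit of the present paper: by Lemma~G and Proposition~\ref{L5}, $(P_\theta\vert K_{B})^*(P_\theta\vert K_{B})=I-AA^*$ with $AA^*$ compact; positivity plus failure of boundedness below forces $\|AA^*\|=1$; compactness turns this into an attained eigenvalue; and a nonzero element of $\ker(P_\theta\vert K_{B})$, expanded in the Riesz basis $\widetilde{\mathcal K}_\Lambda$ (Theorem~A), produces a nontrivial $\ell^2$-relation among the $\widetilde k_\theta(\lambda_n,\cdot)$, destroying minimality. (The aside about boundedness of the synthesis map is superfluous: convergence of $\sum_n c_n'\widetilde k_\theta(\lambda_n,\cdot)$ follows simply by applying the bounded operator $P_\theta$ to the convergent expansion of $u$.)

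The genuine gap is (3)$\Rightarrow$(4), and you flag it yourself: ``one deduces $\|H_{\overline{B}f}\|<1$'' is asserted, not proved. The failure is structural, not cosmetic: your argument for this direction never uses the hypotheses $\Lambda\in(C)$ and $\theta(\lambda_n)\to0$, yet they are exactly what makes the implication work. Your identity gives, for every unit vector $g\neq 0$, $\|H_{\overline{B}f}g\|_2^2\le 1-\|ag\|_2^2<1$ (since $a$ is outer, hence nonzero a.e.), but a supremum of quantities each strictly less than $1$ can still equal $1$; no estimate of this pointwise kind can exclude that, and the Toeplitz term does not help. What excludes it is \emph{norm attainment}, and this is where the hypotheses enter: since $f-\theta\in BH^\infty$ we have $H_{\overline{B}f}=H_{\overline{B}\theta}$, and by Proposition~\ref{L5} (via Hartman's theorem) this Hankel operator is \emph{compact}. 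A compact operator attains its norm, so if $\|H_{\overline{B}\theta}\|=1$ there is a unit vector $g\in H^2$ with $\|H_{\overline{B}f}g\|_2=1$; your identity then forces
\[
1=\|H_{\overline{B}f}g\|_2^2\le 1-\|ag\|_2^2,
\]
hence $ag=0$ a.e., hence $g=0$ a.e., a contradiction. Therefore $\dist_{L^\infty}(\theta,BH^\infty)=\|H_{\overline{B}\theta}\|<1$, which is (4). (Equivalently, one can invoke the Adamyan--Arov--Krein uniqueness theorem: a norm-attaining $H_{\overline{B}\theta}$ with norm $1$ would make $\theta$ the \emph{only} element of $\mathcal B$ in the coset $\theta+BH^\infty$, and $\theta$, being inner, is an extreme point of $\mathcal B$, contradicting (3).) So the missing idea is precisely the mechanism that powered your own proof of (1)$\Rightarrow$(2): compactness upgrades a soft bound into an attained extremum. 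Without it, this step of your argument fails.
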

 
\section{Functions with given Schur--Nevanlinna coefficients}

\renewcommand*{\proofname}{Proof of Proposition~\ref{Pr1}}
\begin{proof}
By Lemma~B 
and \eqref{distNRKmuinfty}, the completeness of $\mathcal K_{\Lambda, \theta}$ implies that for every $ \mu \in \D \setminus \Lambda $ we have 
$$
\prod_{n \ge 1} \frac {1-|\theta_n(\mu)|^2}{1-|\theta_n(\mu)|^2|b_{\lambda_{n-1}}(\mu)|^2}=0.
$$ 
Thus $\prod_{n \ge 1} {(1-|\theta_n(\mu)|^2)}=0$ and, hence, $\sum_{n \ge 1}  |\theta_n(\mu)|^2=\infty$.
\end{proof}

Consequently, if for some  $ \mu \in \D \setminus \Lambda $  we have $\sum_{n \ge 1}  |\theta_n(\mu)|^2 < \infty$, then $\mathcal K_{\Lambda, \theta}$ is not complete in $K_\theta$. Later on in this section we show that this sufficient condition is not necessary.


To proceed further on, we need to recall the following construction introduced  in \cite{4}.
\subsection {Inverse Schur--Nevanlinna process I}

Let 
$\Lambda=\{\lambda _n\}_{n \ge 0}  \in (B)$,  $\{\gamma_n\}_{n\ge 0}$ be  a sequence of points in $\D$.
For $n \ge 0$ we set
\begin{equation}
\left.
\begin{aligned}
h_{n,0}&=\gamma_n, \qquad n \ge 0, \\
h_{n,k}&=\tau_{-\gamma_{n-k}} (\tau_{\lambda_{n-k}} h_{n,k-1}), \qquad 1 \le k \le n.
\end{aligned}
\right \} 
\label{ISN1}
\end{equation}

One can easily verify (see also \cite[Lemma 2.1]{4}) that for $ 0\le k\le n$ we have
\begin{itemize}
\item[(1)] $\|h_{n,k}\|_\infty  < 1$.
\item[(2)] $h_{n, n-k} =(h_{n,n})_k$,  
where $(h_{n,n})_k$ is the $k$-th Schur--Nevanlinna function corresponding to $ \Lambda$, $ h_{n,n}$.
\end{itemize}


By induction, one verifies that
$$
1-|h_{n,n}(\zeta)|^2=(1-|\gamma_n|^2)\prod_{k=0}^{n-1}  \frac {1-|\gamma_k|^2}{|1+\overline {\gamma_k}\tau_{\lambda_k}(\zeta)h_{n,n-k-1} (\zeta)|^2}, \qquad \zeta \in \T .
$$

Under condition \eqref{sumConverges},
the latter equality gives
$$
1-|h_{n,n}(\zeta)|^2\ge (1-|\gamma_n|^2)\prod_{k=0}^{n-1}  \frac {1-|\gamma_k|^2}{(1+|\gamma_k|)^2}\ge c>0,\qquad \zeta \in \T,
$$
where the constant $c$ does not depend on $n$.

Let now $\theta$ be an inner function different from a finite Blaschke product, 
$\Lambda=\{\lambda _n\}_{n \ge 0}  \in (B)$. Let  $\{\gamma_n\}_{n\ge 0}$ be the sequence of the  Schur--Nevanlinna  coefficients corresponding to $\Lambda$, $\theta$.

Recall that $B_{k,n}=\prod_{j=k}^n b_{\lambda_j}$. For $ 0\le k\le n$ we have
\begin{itemize}
\item[(3)] $h_{n,n-k}\in \theta_k + B_{k,n} H^\infty$ (and, in particular, $h_{n,n} \in \theta+B_{0,n} H^\infty$).
\end{itemize}

By a normal family argument, there exists a subsequence $\{h_{n_k,n_k}\}$ of $\{h_{n,n}\}$ that
converges uniformly on compact subsets of the unit disc to a function $h \in  \theta+B_\Lambda H^\infty$. 

%
Then 
\begin{equation}
\|h\|_\infty<1
\label{star}
\end{equation} 
and, hence, $\dist_{H^\infty}(\theta,B_\Lambda H^\infty)=\dist_{L^\infty}(\theta \overline{B_\Lambda}, H^\infty)<1$. If, additionally, $\Lambda \in (C)$, Theorem~D 
shows that 
$\widetilde  {\mathcal K}_{\Lambda, \theta}$ is a Riesz sequence.


Next, we consider a modification of the previous construction.
\subsection {Inverse Schur--Nevanlinna process II}
Given $ \theta \in {\mathcal B}$ which is not a finite Blaschke product, $\Lambda=\{\lambda_n\}_{n\ge 0} \in (B)$, let $\{\gamma_n\}_{n\ge 0}$ be the Schur--Nevanlinna coefficients 
and let $\{\theta_n\}_{n\ge 0}$ be  the Schur--Nevanlinna functions 
corresponding to $\Lambda, \theta$.  Given $\mu \in \D \setminus \Lambda$, we define for $n \ge 0$ :
\begin{equation}
\left.
\begin{aligned}
h_{\mu;n,0} &=\theta_n(\mu), \\
h_{\mu;n,k} & =\tau_{-\gamma_{n-k}} (\tau_{\lambda_{n-k}} h_{\mu;n,k-1}), \qquad 1 \le k \le n.
\end{aligned}
\right \} 
\end{equation}

For $ 0\le k\le n$ we have 
\begin{itemize}
\item[(1)] $\|h_{\mu;n,k}\|_\infty  \le 1$.
\item[(2)] $h_{\mu;n, n-k} =(h_{\mu;n,n})_k$, 
where $(h_{\mu;n,n})_k$ is the $k$-th Schur--Nevan\-linna function corresponding to $\Lambda$, $ h_{\mu;n,n}$.
\item[(3)] $h_{\mu;n,n-k}\in \theta_k + b_\mu B_{k,n-1} H^\infty$
(and, in particular, $h_{\mu;n,n}\in \theta+b_\mu  B_{0,n-1} H^\infty$).

\end{itemize}

Again, by a normal family argument, 
there exists a subsequence $\{h_{\mu;n_k,n_k}\}$ of $\{h_{\mu;n,n}\}$ that
converges uniformly on compact subsets of the unit disc to a function $h_\mu \in  \theta+b_\mu B H^\infty$.  

Suppose that  condition \eqref{sumConverges} holds. By \eqref{distNRKmuinfty2}, $k_\theta(\mu, \cdot) \in \spanmat \mathcal K_{\Lambda, \theta}$ 
if and only if $\sup_n |\theta_n(\mu)| =1$. Furthermore, if   $\sup_n |\theta_n(\mu)| =1$ for some $\mu \in \D \setminus \Lambda$, then the same equality holds for every $\mu \in \D \setminus \Lambda$. In this case the family $\mathcal K_{\Lambda, \theta}$ is complete in $K_\theta$, 
and hence  for every $\mu \in \D \setminus \Lambda$ the family $\mathcal K_{\Lambda \cup \{\mu\}, \theta}$ is not minimal.   In \cite[Theorem 4.7]{5}, we proved that the latter is equivalent to the fact that $\|\theta+Bh\|_\infty>1$ if $h  \not\equiv 0$. 
Thus in this case $h_\mu$ is equal to the function $\theta$ itself. 

Now suppose that $\sup_n |\theta_n(\mu)| <1$ (and condition \eqref{sumConverges} still holds).  By construction of $h_{\mu;n,n}$ we have
\begin{multline*}
\label{1-hmucarre}
1-|h_{\mu;n,n}|^2=(1-|\theta_n(\mu)|^2)\prod_{k=0}^{n-1}  \frac {1-|\gamma_k|^2}{|1+\overline {\gamma_k}\tau_{\lambda_k}h_{\mu;n,n-k-1}|^2}
\\
\ge (1-|\theta_n(\mu)|^2)\prod_{k=0}^{n-1}  \frac {1-|\gamma_k|^2}{(1+|\gamma_k|^2)} \ge C(\mu)>0
\end{multline*}
on the unit circle, where the constant $C(\mu)$ does not depend on $n$ (but depends on $\mu$). 
Thus, $\|h_\mu\|_\infty<1$ and, hence, $\dist_{H^\infty}(\theta,b_\mu B_\Lambda H^\infty)<1$. 
If, additionally, $\Lambda \in (C)$, Proposition~E 
shows that $\mathcal K_{\Lambda, \theta}$ is a Riesz sequence which is not complete in $K_\theta$.

\subsection{Proof of Lemma~\ref{L2}}
To prove part (i) of Lemma~\ref{L2}, we start with a limit function $h$ of the inverse Schur--Nevanlinna process \eqref{ISN1}.

Let $\{h_n\}_{n\ge 0} $ be the Schur--Nevanlinna functions corresponding to $\Lambda, h$. The corresponding Schur--Nevanlinna  coefficients are $\{ \gamma_n \}_{n\ge 0} $.
By induction, we have
\begin{equation}
1-|h(\zeta)|^2= 
(1-|h_n(\zeta)|^2) \prod_{k=0}^{n-1} \frac {1-|\gamma_k|^2}{|1+\overline{\gamma_{k}}\tau_{\lambda_k}(\zeta)h_{k+1}(\zeta)|^2}, \quad \zeta \in \T.
\label{hinduction}
\end{equation}
By condition \eqref{sumConverges} we have \eqref{star}, and, again by condition \eqref{sumConverges}, we obtain 
 that  $\sup_n \|h_n\|_\infty<1$.

To prove part (ii) of Lemma~\ref{L2}, we use yet another modification of the inverse Schur--Nevanlinna process.

\begin{ISP3} 
Let 
$\Lambda=\{\lambda _n\}_{n \ge 0}  \in (B)$,  $\{\gamma_n\}_{n\ge 0}$ be  a sequence of points in $\D$.
For $n \ge 0$ we set
\begin{equation} 
\left  .
\begin{aligned}
h_{n,0}&=1,\\
h_{n,k}&=\tau_{-\gamma_{n-k}} (\tau_{\lambda_{n-k}} h_{n,k-1}), \qquad 1 \le k \le n.
\end{aligned}
\right \} 
\end{equation}
\end{ISP3}

%

For $ 0\le k\le n$ we have 
\begin{itemize}
\item[(1)] $\|h_{n,k}\|_\infty  = 1$.
\item[(2)] $h_{n, n-k} =(h_{n,n})_k$.
\item[(3)] $h_{ n, n-k}(\lambda_k)=\gamma_k$.
\end{itemize}

Thus the Schur--Nevanlinna coefficients of $h_{n,n}$ at the sequence of points $\{\lambda_0, \lambda_1, \dots , \lambda_{n-1}\}$ are $\gamma_0, \gamma_1, \dots , \gamma_{n-1}$.
Once again, there exists a subsequence $\{h_{n_k,n_k}\}$ of $\{h_{n,n}\}$ that converges uniformly on compact subsets of the unit disc to a function $h \in {\mathcal B}$.
From now on we write  $h_{n,n}$ instead of $h_{n_k,n_k}$. Denote by  $\{h_n$\}  the Schur--Nevanlinna functions corresponding to $\Lambda, h$.

Next we show that 
\begin{equation}
|h_n(z)| \to 1, \quad  z \in \D \setminus \Lambda.
\label{hnzto1}
\end{equation}

By definition, we have
\begin{multline*}
h(z)-h_{n,n}(z) = \frac {h_1\tau_{\lambda_0}(z) +\gamma_0}{1+\overline {\gamma_0} h_1 \tau_{\lambda_0} (z)} - \frac {h_{n,n-1}\tau_{\lambda_0}(z) +\gamma_0}{1+\overline {\gamma_0} h_{n,n-1} \tau_{\lambda_0}(z) }=\\
= \frac {(h_1(z)-h_{n,n-1}(z)) \tau_{\lambda_0}(z)(1-|\gamma_0|^2)} {(1+\overline {\gamma_0} h_1 \tau_{\lambda_0}(z))(1+\overline {\gamma_0} h_{n,n-1} \tau_{\lambda_0}(z))}, \quad z \in \overline \D, \quad n \ge 1.
\end{multline*}

By induction, we get (for  $z \in \overline \D$)
\begin{multline*}
h(z)-h_{n,n}(z) \\= (h_n(z) - 1)\prod_{k=0}^{n-1} \frac {(1-|\gamma_k|^2) \tau_{\lambda_k}(z)} {(1+\overline {\gamma_k} h_{k+1} \tau_{\lambda_k}(z))(1+\overline {\gamma_k} h_{n,n-k-1} \tau_{\lambda_k}(z))}.
\end{multline*}

Condition \eqref{sumConverges} gives
$$
|h(z)-h_{n,n}(z)| \asymp |h_n(z) - 1|\ |B_{0,n-1}(z)|, 
$$
 uniformly in $z \in \overline \D$ and $n \ge 1$.
Since  $\{h_{n,n}\}_{n \ge 0}$  converges to $h$ uniformly on compact subsets of $\D$, 
we have $\lim_{n \to \infty}|h_n(z)| = 1$  (even $\lim_{n \to \infty} h_n(z) = 1$),  $z \in \D \setminus \Lambda$.

On the other side, we have, as in \eqref{hinduction},
$$
1-|h(\zeta)|^2 = (1-|h_n(\zeta)|^2 ) \prod_{k=0}^{n-1} \frac {1-|\gamma_k|^2} {|1+\overline {\gamma_k}  \tau_{\lambda_k}(\zeta) h_{k+1}(\zeta)|^2}, \quad \zeta \in \T.
$$
Suppose that $|h(\zeta)|<1-\varepsilon$ on a set $e \subset \T$ of positive Lebesgue measure. Then by \eqref{sumConverges}, for every $n >0$ we have $|h_n(\zeta)|<1-\delta \varepsilon$ on $e$,
where the constant $\delta<1$ depends on the sequence $\{\gamma_n\}_{n \ge 0}$. Fix $z \in \D \setminus \Lambda$. Then the Jensen formula gives $|h_n(z)|<1-\delta \varepsilon \beta$ for some
$\beta=\beta(e,z)>0$ that contradicts to \eqref{hnzto1}.
Thus,  $h$ is inner and we can take $\theta=h$. \qed

\subsection{The proofs of Proposition~\ref{Th3} and Theorem~\ref{Th4}}

In what follows we need 
\begin{AAK} (see \cite[p.204]{1})
Let 
$f \in L^\infty$ and $\dist_{L^\infty}(f, H^\infty)<1$. 
Then $f+H^\infty$ contains a unimodular function.
\end{AAK}

\renewcommand*{\proofname}{Proof of Proposition~\ref{Th3}}
\begin{proof}
(i) Let $\Gamma=\{\gamma_n\}_{n \ge 0} $ satisfy condition \eqref{sumConverges}. Pick some $\mu \in \D \setminus \Lambda $  and let $h$ be the function constructed in Lemma~\ref{L2} (i). 
Since $\|h\|_{L^\infty} < 1$, we have  ${\|h \overline{b_\mu B_\Lambda}\|}_{L^\infty} < 1$, and the Adamyan--Arov--Krein theorem ensures the existence of  a unimodular function $\varphi \in h \overline{b_\mu B_\Lambda}+H^\infty$. Put $\theta :=\varphi b_\mu B_\Lambda$.
Then $\theta $ is inner and $\theta - h \in b_\mu B_\Lambda H^\infty$. Thus  $ {\theta}_n(\mu) = h_n(\mu)$, $n \ge 0$. Since $\sup_{n \ge 0}|h_n(\mu)|<1$,  we obtain that   $\sup_{n \ge 0}|h_n(w)|<1$ for every $w \in \D$.
Next, formula \eqref{distlmu} completes the proof.
\newline\noindent (ii) Let $\Gamma=\{\gamma_n\}_{n \ge 0}$ satisfy condition \eqref{sumConverges}. It suffices to take the function $\theta$  from Lemma~\ref{L2} (ii) and use formula  \eqref{distlmu}.
\end{proof}

Now we pass to 
projections of reproducing kernels.
As we have already mentioned, the Carleson condition is necessary for $ {\mathcal K}_{\Lambda, \theta}$ to be a Riesz sequence (and, in particular, the points $\lambda_i$ have to be distinct).

\renewcommand*{\proofname}{Proof of Theorem~\ref{Th4}}
\begin{proof}
(i) Pick some $\mu_0 \in \D \setminus \Lambda$ and let $\theta$ be the function  constructed in the proof of Proposition~\ref{Th3} (i).  Since  \eqref{sumConverges} is fulfilled, we have $\dist_{H^\infty}(\theta,BH^\infty)<1$.
Proposition~\ref{Th3} states that  $\sup_n |\theta_n(\mu_0)|<1$. By \eqref{distNRKmuinfty2}  and \eqref{sumConverges}, $k_\theta(\mu_0, \cdot) \notin \spanmat\mathcal  K_{\Lambda , \theta}$ and, thus, $  \widetilde  {\mathcal K}_{\Lambda, \theta}$ is not complete in $K_\theta$.
Since $\Lambda  \in (C) $ and $\dist_{H^\infty}(\theta,BH^\infty)<1$, $  \widetilde  {\mathcal K}_{\Lambda, \theta}$ is a Riesz sequence which is not complete in $K_\theta$. 

As we have already mentioned, the Schwarz lemma ensures that $\sup_n |\theta_n(\mu)|<1$ for every $\mu \in \D\setminus \Lambda$.
Since 
$$
\dist_{H^2}^2(P_{\theta} k(\mu, \cdot), \spanmat\mathcal  K_{\Lambda , \theta}) =   (1-|\theta(\mu)|^2)\dist_{H^2}^2(\widetilde k_\theta(\mu, \cdot), \spanmat  \mathcal K_{\Lambda, \theta}), 
$$ 
by \eqref{distNRKmuinfty2} and Theorem~A  
we obtain that
$$
\frac {\dist_{H^2}^2(P_{\theta} \widetilde  {k}(\mu, \cdot), \spanmat\mathcal  K_{\Lambda , \theta})}{\dist_{H^2}^2( \widetilde k(\mu, \cdot), K_B)}  =   \lim_{n\to\infty} (1-|\theta_n(\mu)|^2) \prod_{k=0}^{n-1} \frac {|1-\overline \gamma_k \theta_k(\mu)|^2}{1- | \gamma_k|^2}.
$$
Taking into account
\eqref{sumConverges}, we get for every $\mu \in \D \setminus \Lambda$ that 
$$
c(1-\lim_{n \to \infty} |\theta_n(\mu)|^2) \le \frac {\dist_{H^2}^2(P_{\theta} \widetilde  {k}(\mu, \cdot), \spanmat\mathcal  K_{\Lambda , \theta})}{\dist_{H^2}^2( \widetilde k(\mu, \cdot), K_B)}   \le C(1-\lim_{n \to \infty} |\theta_n(\mu)|^2).
$$
\newline\noindent (ii) Pick some $\mu_0 \in \D \setminus \Lambda$ and let $\theta$ be the function  constructed in the proof of Proposition~\ref{Th3} (ii). Then  use \eqref{distNRKmuinfty2} 
as in the proof of part (i).  
\end{proof}

\section{Compact Hankel operators with symbols $\theta \overline B$}

It is known (see \cite{2}) that if $\Lambda \in (C)$, $\lim_{n \to \infty}
\theta(\lambda_n)=0$, and $\theta$ is not a pure Blaschke product, then 
$\widetilde {\mathcal  K}_{\Lambda, \theta } $ is a Riesz  basis in the closure of its linear hull, and this hull does not coincide with $K_\theta$. It is easily seen that for a pure Blaschke product $\theta$ 
this result is not true: we can take two interpolating Blaschke 
products $B_\Lambda$ and $B_\mathcal M$ such that 
$\lim_{\mu \in \mathcal M, |\mu| \to 1}B_\Lambda(\mu)=0$ 
and $\lim_{\lambda \in \Lambda, |\lambda| \to 1}B_\mathcal M(\lambda)=0$ and remove, if necessary, a finite number of $\lambda \in \Lambda$ or $\mu \in \mathcal M$. Then by Theorem~C 
 and Carleson's interpolating theorem 
both families $\widetilde {K}_{\Lambda, B_\mathcal M}$  and $\widetilde {K}_{\mathcal M, B_\Lambda}$
are Riesz bases (in $K_{B_\mathcal M}$ and $K_{B_\Lambda}$ correspondingly). 
Theorem~\ref{Th6} shows that this example is the ``only possible" one.

\begin{proof}[Proof of Proposition~\ref{L5}]
(i) $\Rightarrow$ (ii) Let $\lim_{n\to\infty} \theta(\lambda_n)=0$. Since $H^\infty+C$ is an algebra,  $\overline {B_{0,n}}H^\infty \subset H^\infty+C$. 
We have
\begin{multline*}
\dist_{L^\infty}(\theta \overline {B_\Lambda}, H^\infty+C)
\le \lim_{n\to\infty} (\dist_{L^\infty}(\theta \overline {B_\Lambda}, \overline {B_{0,n}}H^\infty))\\
=\lim_{n\to\infty} (\dist_{L^\infty}(\theta,  {B_{n+1,\infty}}H^\infty)).
\end{multline*}
Since $\theta(\lambda_n) \to 0$, the latter limit is equal to zero by Carleson's Interpolation Theorem. 

Since $H^\infty+C$
 is closed in $L^\infty$ (see \cite[Vol.1, Part B, Corollary 2.3.1]{11}), we have  
$\theta \overline {B_\Lambda} \in H^\infty+C$. By Hartman's theorem 
$H_{\theta\overline {B_\Lambda}}$ is compact.

(ii) $\Rightarrow$ (iii)  Let $H_{\theta\overline {B_\Lambda}}$ be compact.  By \eqref{connectionFormula},  $\theta (T_ {B_\Lambda})P_ {B_\Lambda}$ is also compact
and thus $\theta(T_ {B_\Lambda})\vert K_ {B_\Lambda}$ is compact. 

(iii) $\Rightarrow$ (i) Let $\theta(T_ {B_\Lambda})\vert K_ {B_\Lambda}$ be compact. Then $A=[\theta (T_{B_\Lambda})]^*=P_+M_{\overline{\te}}\vert K_{B_\Lambda}$ is compact.
By Theorem~A 
the family  $\{\widetilde{k_{\lambda_n}}\}_{n\ge 0}$ is a Riesz basis and thus it tends weakly to zero. 
Since $A$ is compact, we have  $\|A\widetilde{k_{\lambda_n}\|}=|\theta(\lambda_n)| \to 0$.
\end{proof}



\begin{proof}[Proof of Theorem~\ref{Th6}]
By Theorem~C  
 we have
\begin{equation}
\dist_{L^\infty}(B_\mathcal M\overline {B_\Lambda}, H^\infty)<1
\label{dbll1}
\end{equation}
and
\begin{equation}
\dist_{L^\infty}(B_\Lambda \overline {B_\mathcal M}, H^\infty)<1.
\label{dbll1x}
\end{equation}
By Proposition~\ref{L5} we have $B_\mathcal M \overline {B_\Lambda} \in H^\infty+C$.
Since 
$$\dist_{L^\infty}(B_\Lambda \overline {B_\mathcal M}, H^\infty+C) \le
\dist_{L^\infty}(B_\Lambda \overline {B_\mathcal M}, H^\infty),$$ \eqref{dbll1x} gives
$\dist_{L^\infty}(B_\Lambda \overline {B_\mathcal M}, H^\infty+C)<1$, and we can use Lemma~F with $u=B_\mathcal M \overline {B_\Lambda}$. Thus, $B_\Lambda \overline {B_\mathcal M} 
\in H^\infty+C $. By Proposition~\ref{L5}, we get 
$\lim_{n \to \infty}B_\Lambda(\mu_n)=0$. Thus, $\sup_n |B_\Lambda(\mu_n)|<1$. By the hypothesis, we have   $\sup_n |B_\mathcal M(\lambda_n)|<1$, and 
by Theorem~C, 
the family
$\widetilde {\mathcal  K}_{\mathcal M, B_\Lambda}$ is a Riesz basis in $K_{B_\Lambda}$. 
\end{proof}

To prove Corollary \ref{Cor8} we need the following well-known result.

\begin{lemma}
\label{le9}
Let $X=\{x_n\}_{n\ge 1} \sim U+\mathfrak{S}_\infty$ 
and let $V$ be an orthogonalizer of $X$. Then 
\begin{itemize}
\item[(i)]$V^{-1} \in (U+\mathfrak{S}_\infty)$.
\item[(ii)] Let $X^*=\{x_n^*\}_{n\ge 1}$ be the family in $\spanmat X$  biorthogonal to $X$. Then $X^* \sim U+\mathfrak{S}_\infty$.
\item[(iii)] $\lim_{n \to \infty} \| x_n\|=1$.
\item[(iv)] $\lim_{n \to \infty }\dist  (x_n, \spanmat_{ k \neq n} \{x_k\}) = 1$.
\end{itemize}
\end{lemma}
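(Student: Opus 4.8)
The plan is to prove the four statements about a $U+\mathfrak{S}_\infty$ family $X=\{x_n\}_{n\ge 1}$ with orthogonalizer $V$, using the characterization from the Theorem on $U+\mathfrak{S}_\infty$ families together with elementary operator-theoretic facts. Recall that $V$ is bounded and invertible on $\spanmat X$, maps $X$ to an orthonormal family, and by hypothesis $V=U+K$ with $U$ unitary and $K$ compact.

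For part (i), I would start from $V=U+K$ and write $V^{-1}=U^*(UV^{-1})=U^*(I+KV^{-1})^{-1}$ after factoring $V=U(I+U^*K)$. Since $U^*K$ is compact and $V$ is invertible, $I+U^*K$ is invertible; its inverse has the form $I+K'$ with $K'$ compact (because $(I+U^*K)^{-1}-I=-(I+U^*K)^{-1}U^*K$ is a product of a bounded operator with a compact operator). Hence $V^{-1}=(I+U^*K)^{-1}U^*=(I+K')U^*=U^*+K'U^*$, and $U^*$ is unitary while $K'U^*$ is compact, giving $V^{-1}\in(U+\mathfrak{S}_\infty)$.

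For part (ii), the biorthogonal family $X^*$ is obtained from the orthonormal image by applying $(V^*)^{-1}$: concretely, if $e_n=Vx_n$ is orthonormal, then $x_n^*=(V^*)^{-1}e_n$, so $(V^*)^{-1}$ itself plays the role of an orthogonalizer-type map for $X^*$, and the natural orthogonalizer of $X^*$ is $V^*$. Since $V=U+K$ implies $V^*=U^*+K^*$ with $U^*$ unitary and $K^*$ compact, we get $V^*\in(U+\mathfrak{S}_\infty)$, and therefore $X^*\sim U+\mathfrak{S}_\infty$. The cleanest route is to invoke the equivalence (1)$\Leftrightarrow$(3) of the Theorem on $U+\mathfrak{S}_\infty$ families: the Gram matrix of $X^*$ is the inverse of the Gram matrix of $X$, and by part (i) (applied at the level of matrices) the inverse of an $I+\mathfrak{S}_\infty$ matrix is again $I+\mathfrak{S}_\infty$, so $G(X^*)\in(I+\mathfrak{S}_\infty)$ and $X^*$ is an AOS.

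For parts (iii) and (iv), I would use the Gram-matrix description $G(X)=I+K$ with $K$ compact. The norms satisfy $\|x_n\|^2=\langle x_n,x_n\rangle=(G(X))_{nn}=1+K_{nn}$, and since $K$ is compact its diagonal entries tend to $0$ (the diagonal of a compact operator on $l^2$ converges to zero because the standard basis vectors tend weakly to zero and $K$ is compact), giving $\lim_n\|x_n\|=1$. For (iv), the distance from $x_n$ to the span of the remaining vectors is $\dist(x_n,\spanmat_{k\neq n}\{x_k\})=1/\|x_n^*\|$, where $x_n^*$ is the biorthogonal vector; applying (iii) to the family $X^*$ (which is $U+\mathfrak{S}_\infty$ by part (ii)) gives $\|x_n^*\|\to 1$, hence the distance tends to $1$. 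I expect the main obstacle to be the bookkeeping in (ii)—correctly identifying the orthogonalizer of the biorthogonal system and confirming $G(X^*)=G(X)^{-1}$—but once the inverse-stability of the $I+\mathfrak{S}_\infty$ class from (i) is in hand, the remaining parts follow quickly.
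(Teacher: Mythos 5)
Your proposal is correct, and it is in places more self-contained than the paper's own proof. The paper disposes of (i) by citing Fricain--Rupam (its reference \cite{9}, Lemma 2.2(b)); your factorization $V=U(I+U^*K)$, with $(I+U^*K)^{-1}=I+K'$ where $K'=-(I+U^*K)^{-1}U^*K$ is compact, is precisely the standard argument behind that citation, so you lose nothing by writing it out. For (iii) the paper works directly with operators: it writes $V^{-1}=U_0+K_0$ (using (i)), expands $\|x_n\|^2=\|U_0e_n+K_0e_n\|^2$ in the orthonormal basis $e_n=Vx_n$, and uses $\|K_0e_n\|\to 0$; your route through the Gram matrix ($\|x_n\|^2=1+K_{nn}$ and the diagonal of a compact operator tends to zero) is an equivalent, equally clean variant, legitimized by the equivalence $2\Leftrightarrow 3$ in the Theorem on $U+\mathfrak{S}_\infty$ families. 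Part (iv) is identical in both proofs: the formula $\dist(x_n,\spanmat_{k\neq n}\{x_k\})=\|x_n^*\|^{-1}$ together with (iii) applied to $X^*$ (your version of this formula is in fact the correct one; the paper's ``$\dist^2(x_n,\cdot)=\|x_n^*\|^{-1}$'' is a typo, harmless for the limit).

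One bookkeeping correction in (ii): your identification of the biorthogonal system is swapped. With $e_n=Vx_n$, the biorthogonal vectors are $x_n^*=V^*e_n$ (indeed $(x_m,V^*e_n)=(Vx_m,e_n)=\delta_{mn}$), so the orthogonalizer of $X^*$ is $(V^*)^{-1}=(V^{-1})^*$ --- which is exactly what the paper cites from Nikolski --- and not $V^*$ as you wrote. The slip is harmless: the class $U+\mathfrak{S}_\infty$ is closed under adjoints and, by (i), under inverses, so the conclusion $X^*\sim U+\mathfrak{S}_\infty$ follows whichever of $V^*$, $(V^*)^{-1}$ is the true orthogonalizer; moreover your preferred Gram-matrix route stands on its own, since $G(X)$ and $G(X^*)$ are the matrices of $(VV^*)^{-1}$ and $VV^*$ in the basis $\{e_n\}$, whence $G(X^*)=G(X)^{-1}\in(I+\mathfrak{S}_\infty)$ by the same inverse-stability argument as in (i). A last small point: the expression $V^{-1}=U^*(I+KV^{-1})^{-1}$ at the start of your part (i) is garbled (the correct identity is $V^{-1}=U^*(I-KV^{-1})$, which incidentally yields the result in one line); the factorization argument you actually carry out is the correct one.
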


\begin{proof}[Proof]
\begin{itemize}
\item[(i)]See, for example, \cite[Lemma 2.2(b)]{9}
%
\item[(ii)] Since $(V^{-1})^*$ is an orthogonalizer of $X^*$ (\cite{1}, Lection 6), we have $X^* \sim U+\mathfrak{S}_\infty$.

\item[(iii)]  
The family $\{e_n=Vx_n\}_{n \ge 0}$ is an orthonormal basis.
By $(i)$, we have $V^{-1}=U+K$, where $U$ is a unitary operator and $K$ is a compact one. Then
$$
\|x_n\|^2=(V^{-1}e_n,V^{-1}e_n)=(U e_n+K e_n, U e_n+K e_n).
$$
Since $\{e_n\}_{n \ge 0}$ tends weakly to zero, we have  $\lim_{n \to \infty} \|x_n\|^2 =1$.

\item[(iv)] Since (see, for example, \cite[page 132]{1})
$$
\dist ^2(x_n, \spanmat \{x_k\}_{ k \neq n}) = \| x_n^*\|^{- 1},
$$
(i) and (iii) give  $\lim_{n \to \infty}  \| x_n^*\|=1$. Therefore, \newline\noindent $\lim_{n \to \infty } \dist  (x_n, \spanmat_{ k \neq n} \{x_k\}) = 1$.
\end{itemize}
\end{proof}

%
%
\begin{proof}[Proof of Corollary~\ref{Cor8}]
$(i) \Rightarrow(ii)$  By Lemma~G,  
the Gram matrix of $\{P_\te l_n\}_{n \ge 0}$ is the matrix of the operator $I-\theta(T_{B_\Lambda}) \theta(T_{B_\Lambda})^*$  in the basis $ \{l_n\}_{n \ge 0}$.
By Proposition~\ref{L5}, the operator $\theta(T_{B_\Lambda})$ is compact and, hence, $\theta(T_{B_\Lambda}) \theta(T_{B_\Lambda})^*$ is also compact.

To prove $(ii) \Rightarrow(i)$ it is sufficient to use the fact that for a bounded operator $A$ the compactness of $AA^*$ is equivalent to the  compactness of $A$
(and apply again Lemma~G and Proposition~\ref{L5}). By Lemma~\ref{le9}, 
$\lim_{n \to \infty }\dist_{H^2} (P_\theta l_{n}, \spanmat \mathcal  K_{\Lambda_{n-1}, \theta}) = 1$. 

%

 \end{proof}

\begin{rem}
Let $\Lambda \in (C)$.
The condition $\lim_{n\to\infty} \theta(\lambda_n)=0$ does not imply the minimality of $\{P_\te l_n\}_{n \ge 0}$. However, if $\{P_\te l_n\}_{n \ge 0}$ is minimal,
then the (bounded linear) operator $P_\te l_n \rightarrow l_n $ is an orthogonalizer of the family $\{P_\te l_n\}_{n \ge 0}$ (it maps $\spanmat_{n \ge 0} \{P_\te l_n\}$ onto $K_{B_\Lambda}$).
\end{rem}


Indeed, 
by Theorem~D  and Lemma~H, 
the operator $P_{\theta} \vert K_B$  is an isomorphism  onto its image. Thus it is invertible on the image and the inverse operator $V$ is also an isomorphism.
By Lemma~G and
Proposition~\ref{L5}
we have $P_\theta=I+K$, where $K$ is a compact operator. By Lemma~9, $V=I+K_0$,
where $K_0$ is a compact operator.

\begin{rem}
Each of the conditions  
$$
\lim_{n \to \infty }\dist_{H^2} (P_\theta l_{n}, \spanmat \mathcal  K_{\Lambda_{n-1}, \theta})  
= 1
$$
(necessary for the family  $\{P_\theta l_{n}\}_{n \ge 0}$ to be an asymptotically orthonormal sequence) 
and   
$$
\lim_{n \to \infty } \dist_{H^2}  (\widetilde k_{\theta}(\lambda_n, \cdot), \spanmat  \mathcal K_{\Lambda \setminus \{\lambda_n\}, \theta}) = 1
$$
(necessary for the family $\widetilde {\mathcal  K}_{\Lambda, \theta} $ to be an asymptotically orthonormal sequence) 
implies that
$$
\prod_{k=1}^\infty \frac {1-|\theta_k(\lambda_n)|^2}{1-|\theta_k(\lambda_n)|^2|b_{\lambda_{k-1}}(\lambda_n)|^2}=1,
$$
see \eqref{distMWlambdan} and \eqref{distNRKlambdaninfty}.

Certainly, the families $\{P_\theta l_{n}\}_{n \ge 0}$ and $\widetilde {\mathcal  K}_{\Lambda, \theta} $ are not necessarily asymptotically orthonormal sequences simultaneously. For example, the thinness of
$\Lambda$ is necessary for the family $\widetilde {\mathcal  K}_{\Lambda, \theta} $ to be an asymptotically orthonormal sequence and is not necessary for the family $\{P_\theta l_{n}\}_{n \ge 0}$ to be an asymptotically orthonormal sequence.
Conversely, the condition $\lim_{n \to \infty} \theta(\lambda_n) =0$ is necessary for the family $\{P_\theta l_{n}\}_{n \ge 0}$ to be an asymptotically orthonormal sequence and is not necessary for the family $\widetilde {\mathcal  K}_{\Lambda, \theta} $ to be an asymptotically orthonormal sequence.
\end{rem}


\end{document}